\theoremstyle{plain}                
\newtheorem{thm}{Theorem}
\newtheorem{lem}[thm]{Lemma}        
\newtheorem{prop}[thm]{Proposition}
\newtheorem{fact}[thm]{Fact}
\newtheorem{cor}[thm]{Corollary}
\theoremstyle{definition}
\newtheorem{defn}[thm]{Definition}
\theoremstyle{remark}
\newtheorem{rem}[thm]{Remark}
\DeclareMathOperator{\Arg}{Arg}   
\DeclareMathOperator{\sgn}{sgn}
\providecommand{\abs}[1]{\lvert#1\rvert}
\newcommand{\overbar}[1]{\mkern 1.5mu\overline{\mkern-1.5mu#1\mkern-1.5mu}\mkern 1.5mu} 
\newcommand{\vertiii}[1]{{\left\vert\kern-0.25ex\left\vert\kern-0.25ex\left\vert #1 
										 \right\vert\kern-0.25ex\right\vert\kern-0.25ex\right\vert}} 
\def\re{\mathop{\rm Re}\nolimits}
\def\im{\mathop{\rm Im}\nolimits}
\def\exp{\mathop{\rm e}\nolimits}
\def\CC{\mathbb C}
\def\RR{\mathbb R}
\def\NN{\mathbb N}
\def\DD{\mathbb D}
\def\calL{\mathcal{L}}
\newlength\Colsep
\title{Conditions for asymptotic stability of first order scalar differential-difference equation with complex coefficients}
\author{Rafał Kapica\footnote{Faculty of Applied Mathematics, AGH University of Science and Technology, al. Mickiewicza 30, 30-059 Kraków}, Radosław Zawiski\footnote{Department of Automatic Control and Robotics, Silesian University of Technology, ul. Akademicka 16, 44-100 Gliwice}}
\date{ }
\begin{document}

\maketitle
\abstract
We investigate a scalar characteristic exponential polynomial with complex coefficients associated with a first order scalar differential-difference equation. Our analysis provides necessary and sufficient conditions for allocation of the roots in the complex open left half-plane what guarantees asymptotic stability of the differential-difference equation. The conditions are expressed explicitly in terms of complex coefficients of the characteristic exponential polynomial, what makes them easy to use in applications. We show examples including those for retarded PDEs in an abstract formulation.

\noindent {\bf Keywords:}
first order differential-difference equation with complex coefficients, stability of differential-difference equation, characteristic exponential polynomial of differential-difference equation, retarded differential-difference equation (DDE)

\noindent {\bf 2020 Subject Classification:} 30C15, 34K06, 34K20, 34K41

\section{Introduction}
In this article we study the asymptotic stability of a scalar linear differential-difference equation (DDE) 
\begin{equation}\label{eq:differential-difference_eq_scalars}
x'(t)=\lambda x(t) +\gamma x(t-\tau),\quad t\geq0,
\end{equation}
where $\lambda,\gamma\in\CC$ and $0<\tau$, through the analysis of the corresponding characteristic equation
\begin{equation}\label{eq:characteristic_equation_general}
s-\lambda-\gamma\exp^{-s\tau}=0.
\end{equation}
This problem is frequently related to stability analysis of 
\begin{equation}\label{eq:differential-difference_equation_general}
x'(t)=f\left(x(t),x(t-\tau)\right),\quad t\geq0,
\end{equation}
where $f:\RR^n\times\RR^n\to\RR^n$ is a smooth (nonlinear) function about $x_0\in\RR^n$, via its linearization given by
\begin{equation}\label{eq:differential-difference_equation_general_linearized}
x'(t)=Ax(t)+Bx(t-\tau),
\end{equation}
where $A=J_1f(x_0,x_0)$ and $B=J_2f(x_0,x_0)$ are partial Jacobian matrices of $f$ at $(x_0,x_0)$. To be more precise we use the following
\begin{defn}\label{def:asymptotic_stability}
An equilibrium solution $x^*(t)\equiv x_0\in\RR^n$ of \eqref{eq:differential-difference_equation_general} is exponentially stable if there exist $M,\omega,\delta>0$ such that $\|x(t)-x_0\|\leq M\exp^{-\omega t}$ $(t\geq0)$ holds for every solution $x$ of \eqref{eq:differential-difference_equation_general} satisfying an inital condition $\|x(t)-x_0\|<\delta$ $(t\in[-\tau,0])$ with the Euclidean norm $\|\cdot\|$.
\end{defn}
%
%
By the \textit{principle of linearized stability} \cite{Diekmann_et_al_1995} for the case at hand we have \cite{Nishiguchi_2016}
\begin{fact}
Let the linearization of \eqref{eq:differential-difference_equation_general} about an equilibrium solution $x^*(t)\equiv x_0$ be expressed by \eqref{eq:differential-difference_equation_general_linearized} and let the corresponding characteristic equation be given by
\begin{equation}\label{eq:characteristic_equation_general_linearized}
\det\left[sI-A-B\exp^{-s\tau}\right]=0. 
\end{equation}
Then the following statements hold:
\begin{enumerate}[(i)]
\item $x^*$ is exponentially stable if $\re s<0$ for all characteristic roots $s$ of \eqref{eq:characteristic_equation_general_linearized},
\item $x^*$ is unstable if $\re s>0$ for all characteristic roots $s$ of \eqref{eq:characteristic_equation_general_linearized}.
\end{enumerate}
\end{fact}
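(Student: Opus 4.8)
The plan is to regard both \eqref{eq:differential-difference_equation_general_linearized} and \eqref{eq:differential-difference_equation_general} as abstract Cauchy problems on the history space $X=C([-\tau,0],\RR^{n})$ and to reduce the two assertions to (a) an exponential decay estimate for the $C_{0}$-semigroup generated by the linear part, together with (b) a local estimate that absorbs the nonlinearity near the equilibrium. I would first shift coordinates by $x_{0}$, so that the equilibrium sits at the origin and $f(x_{0},x_{0})=0$.

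For the linear equation I would collect the relevant spectral facts. The solutions of \eqref{eq:differential-difference_equation_general_linearized} generate a $C_{0}$-semigroup $\{T(t)\}_{t\geq0}$ on $X$ whose generator $\calA$ has compact resolvent, so $\sigma(\calA)$ consists entirely of eigenvalues, and these eigenvalues coincide with the zeros of $\Delta(s):=\det\left[sI-A-B\exp^{-s\tau}\right]$. Because $\Delta(s)\sim s^{n}$ as $|s|\to\infty$ inside any half-plane $\{\re s\geq -c\}$ --- the terms carrying $\exp^{-s\tau}$ stay bounded there --- each such half-plane contains only finitely many zeros; hence $-\omega_{0}:=\sup\{\re s:\Delta(s)=0\}$ is a maximum, and under hypothesis (i) it is strictly negative, i.e.\ $\omega_{0}>0$. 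Since $\{T(t)\}$ is eventually compact, it is eventually norm-continuous, so the spectral mapping theorem applies:
\[
\sigma(T(t))\setminus\{0\}=\exp^{t\sigma(\calA)}\qquad(t\geq0).
\]
Consequently the growth bound of $\{T(t)\}$ equals $\sup\re\sigma(\calA)=-\omega_{0}$, and for each $\omega\in(0,\omega_{0})$ there is $M\geq1$ with $\|T(t)\|\leq M\exp^{-\omega t}$ for all $t\geq0$.

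For the nonlinear equation, set $g(y,z):=f(y+x_{0},z+x_{0})-Ay-Bz$, so that \eqref{eq:differential-difference_equation_general} reads $y'(t)=Ay(t)+By(t-\tau)+g(y(t),y(t-\tau))$ with $g(0,0)=0$ and, by $C^{1}$-smoothness and Taylor's theorem, $\|g(y,z)\|\leq\varepsilon(r)(\|y\|+\|z\|)$ whenever $\|y\|,\|z\|\leq r$, where $\varepsilon(r)\to0$ as $r\to0^{+}$. Using the abstract variation-of-constants formula of \cite{Diekmann_et_al_1995} in the sun-star ($\odot$-$*$) calculus, a solution segment $y_{t}\in X$ satisfies $y_{t}=T(t)y_{0}+\int_{0}^{t}T^{\odot*}(t-\theta)R(y_{\theta})\,d\theta$, where $R$ is the natural $X^{\odot*}$-valued extension of $g$. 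Combining $\|T(t)\|\leq M\exp^{-\omega t}$ with the bound on $g$ and a Gronwall estimate for $\exp^{\omega t}\|y_{t}\|$ (equivalently, a contraction argument in a space of exponentially weighted continuous functions) shows that there are $\delta>0$, $M'\geq1$ and $\omega'\in(0,\omega)$ such that $\|y_{0}\|_{X}<\delta$ forces $\|y_{t}\|_{X}\leq M'\exp^{-\omega' t}\|y_{0}\|_{X}$; reading this pointwise is precisely the estimate in Definition \ref{def:asymptotic_stability}, which proves (i). Part (ii) is dual: the hypothesis places the (nonempty) spectrum of $\calA$ in the open right half-plane, the corresponding finite-rank spectral projection produces a subspace on which $T(t)$ expands, and the standard instability theorem for semilinear retarded equations in the same framework yields that $x^{*}$ is unstable.

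I expect the real difficulty to lie in the functional-analytic infrastructure rather than in any single inequality. Two points need care. First, justifying the variation-of-constants formula for \eqref{eq:differential-difference_equation_general}: its nonlinearity involves the point evaluations $y(t)$ and $y(t-\tau)$, which are not bounded functionals on $L^{p}$-type state spaces, and this is exactly why one works on $C([-\tau,0],\RR^{n})$ together with the sun-star duality. Second, establishing the eventual compactness, hence eventual norm-continuity, of $\{T(t)\}$, which is what makes the spectral mapping theorem --- and therefore the identification of the growth bound with $\sup\re\sigma(\calA)$ --- available. Both ingredients are classical, which is why the statement is quoted from \cite{Diekmann_et_al_1995,Nishiguchi_2016} rather than reproved here; the substantive work of the present paper is the explicit description, in terms of $\lambda$ and $\gamma$, of when every zero of \eqref{eq:characteristic_equation_general} lies in the open left half-plane.
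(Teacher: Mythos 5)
The paper never proves this Fact: it is quoted, as the principle of linearized stability, from \cite{Diekmann_et_al_1995} and \cite{Nishiguchi_2016}, and the paper's own work begins only afterwards with the scalar characteristic equation \eqref{eq:characteristic_equation_general}. So there is no in-paper argument to compare against; your outline is essentially a reconstruction of the standard proof in the cited reference, and it is the correct route: the solution semigroup of \eqref{eq:differential-difference_equation_general_linearized} on $C([-\tau,0],\RR^n)$ has a generator $\calA$ with compact resolvent whose eigenvalues are exactly the zeros of $\det[sI-A-B\exp^{-s\tau}]$; since only finitely many zeros lie in any half-plane $\{\re s\geq -c\}$, the spectral bound is attained and is negative under hypothesis (i); eventual compactness of the semigroup gives eventual norm continuity, hence the spectral mapping theorem and the identification of the growth bound with the spectral bound; and the sun-star variation-of-constants formula plus a weighted Gronwall (or contraction) argument absorbs the $o(\|y\|+\|z\|)$ remainder, yielding the estimate of Definition~\ref{def:asymptotic_stability}. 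Two small points you would need to make explicit in a full write-up: the integral in the sun-star formula is a weak-star integral which takes values in $X$ (this is where the sun-reflexivity of the retarded problem enters), and in part (ii) your own finiteness observation (finitely many roots in $\{\re s\geq 0\}$) is what guarantees that the unstable spectral set is a finite, compact set so that the Riesz projection is well defined and of finite rank; note also that the classical instability theorem requires only one root with positive real part, so the hypothesis of (ii) as stated in the Fact is stronger than the machinery needs.
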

In a finite-dimensional setting and under appropriate, though restrictive conditions, for example if $A$ and $B$ commute - see \cite{Motzkin_Taussky_1952} or  \cite{Stepan_1989} - the problem of asymptotic stability of \eqref{eq:differential-difference_equation_general} is equivalent to finding conditions on the coefficients of \eqref{eq:differential-difference_eq_scalars} which will guarantee that every root $s$ of \eqref{eq:characteristic_equation_general} is such that $\re(s)<0$. In this setting coefficients $\lambda$ and $\gamma$ in \eqref{eq:characteristic_equation_general} are eigenvalues of $A$ and $B$, respectively. 

Equation \eqref{eq:differential-difference_eq_scalars} is encountered also in an infinite-dimensional setting. Consider \eqref{eq:differential-difference_equation_general_linearized} on a Hilbert space $X$ where $A$ is a diagonal generator of a strongly continuous semigroup and $B$ is linear and bounded diagonal operator on $X$ (see Example 4 below, \cite{Kapica_Partington_Zawiski_2022} or \cite{Partington_Zawiski_2019}). Then \eqref{eq:differential-difference_eq_scalars} describes dynamics of \eqref{eq:differential-difference_equation_general_linearized} about a single coordinate that corresponds to a given eigenvalue $\lambda$ of $A$. This setting is not a mere generalisation of the previous one. In finite dimensions it is something rather special that linearization of \eqref{eq:differential-difference_equation_general} produces commuting $A$ and $B$. In infinite dimensions surprisingly many dynamical system are represented by diagonal generators \cite{Tucsnak_Weiss}.

Our motivation to investigate \eqref{eq:differential-difference_eq_scalars} with complex coefficients thus stems from the fact that in both cases above these coefficients represent eigenvalues of respective operators. We also note that whenever mentioning stability we refer to a situation when all the roots of \eqref{eq:characteristic_equation_general} have negative real parts.

The literature contains two intertwined approaches to stability problem - one based on analysis of some form of \eqref{eq:differential-difference_equation_general_linearized} in time domain and one based on analysis of \eqref{eq:characteristic_equation_general_linearized}. In the latter approach the case with $\lambda,\,\gamma\in\RR$ is well understood - see \cite{Hayes_1950}, where the author obtained necessary and sufficient conditions for stability of $s-a-c\exp^{-s}=0$ with $a,c\in\RR$. In a more general form $A(s)+B(s)\exp^{-s\tau}=0$, with $A(s)$ and $B(s)$ real polynomials see \cite{Partington_2004} and references therein. For a thorough exposition of other methods of analysis of the real $\lambda$ and $\gamma$ case see \cite{Michiels_Niculescu_2014} and references therein. 

The case $\lambda,\gamma\in\CC$ is less analysed. In particular, some sufficiency results can be found in \cite{Barwell_1975}, where the author presents a numerical analysis of \eqref{eq:differential-difference_eq_scalars} and stipulates asymptotic stability for every $\tau>0$ if $-\re\lambda>|\gamma|$. The authors of \cite{Cahlon_Schmidt_2002} provide, based on algorithmic criteria, some sufficiency result for specific values of complex $\lambda$ and $\gamma$, proving also the result in \cite{Barwell_1975} for some cases. Authors of \cite{Wei_Zhang_2004} built on \cite{Cahlon_Schmidt_2002} and provide additional sufficient conditions for stability. In \cite{Matsunaga_2008} the author uses a continuous dependence of the roots of \eqref{eq:differential-difference_eq_scalars} on $\tau$ and manages to obtain stability conditions for some values of $\lambda\in\RR$ and $\gamma\in\CC$. In \cite{Nishiguchi_2016} the author provides necessary and sufficient conditions for the zeros of \eqref{eq:characteristic_equation_general} to be in the left complex half-plane. The argument there is based on analysis of the Lambert W function, what complicates applications of obtained conditions. In particular, the condition from \cite[Theorem 1.2]{Nishiguchi_2016} uses a nested trigonometric functions of $\im\lambda$ and $\Arg\gamma$, what makes it difficult to visualise the region in coefficients-plane that ensures stability.  

To the best of authors' knowledge \cite{Noonburg_1969} is the first work providing necessary and sufficient conditions for stability of \eqref{eq:characteristic_equation_general} for $\lambda,\,\gamma\in\CC$ and $\tau=1$. Results of  \cite{Noonburg_1969} are, however, based on specific analysis of roots of \eqref{eq:characteristic_equation_general} which is uneasy to trace for different values of $\tau$, even after change of parameters $a\mapsto a\tau$ and $\eta\mapsto\eta\tau$ (see \eqref{eq:complex_exponential_polynomial_zeros} below). This may explain why, although it precedes many of the works mentioned above, \cite{Noonburg_1969}  did not receive much recognition. 

Our approach here combines analysis of roots placement depending on $\tau$, as shown in \cite[Proposition 6.2.3]{Partington_2004}, with arguments of algebraic nature in the complex plane. This allowed us to obtain necessary and sufficient conditions for stability of \eqref{eq:characteristic_equation_general} based explicitly on a relation between $\lambda,\,\gamma\in\CC$ and $\tau>0$. The conditions do not require to calculate any specific roots of a transcendental equation and allow to visualise how the "stability" region changes with parameters in the coefficients-plane. Thus we not only provide a different formulation of stability conditions but our results are based on a new, different proof.

\section{Preliminaries}
The following observation, which can be found e.g. in \cite{Cahlon_Schmidt_2002} or \cite{Noonburg_1969}, is crucial to simplify the problem of analysis of \eqref{eq:differential-difference_eq_scalars}.
\begin{lem}\label{lem:complex_exponential_poly_equivalent_conditions}
Let $a,b,c,d,\tau\in\RR$ and let $\{s_0\}$ be the set of roots of 
\begin{equation}\label{eq:complex_expo_poly_lem_1}
s-(a+ib)-(c+id)\exp^{-s\tau}=0
\end{equation}
and $\{z_0\}$ be the set of roots of
\begin{equation}\label{eq:complex_expo_poly_lem_2}
z-a-\exp^{-ib\tau}(c+id)\exp^{-z\tau}=0.
\end{equation}
Then $\re(s_0)<0$ for all $s_0$ if and only if $\re(z_0)<0$ for all $z_0$.
\end{lem}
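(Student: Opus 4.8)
The plan is to exhibit an explicit affine bijection between the two root sets that preserves real parts. Concretely, I would substitute $s = z + ib$ into \eqref{eq:complex_expo_poly_lem_1}. The affine term becomes $s-(a+ib) = z-a$, while the exponential factorises as $\exp^{-s\tau} = \exp^{-ib\tau}\exp^{-z\tau}$, since $b$ and $\tau$ are real so that the shift $ib\tau$ simply splits off the exponent. Hence \eqref{eq:complex_expo_poly_lem_1} is transformed into exactly \eqref{eq:complex_expo_poly_lem_2}; that is, $s$ is a root of \eqref{eq:complex_expo_poly_lem_1} if and only if $z = s - ib$ is a root of \eqref{eq:complex_expo_poly_lem_2}.

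Consequently the map $s \mapsto s - ib$ is a bijection of $\{s_0\}$ onto $\{z_0\}$, with inverse $z \mapsto z + ib$. Because $ib$ is purely imaginary, $\re(s) = \re(s-ib)$ for every $s \in \CC$, so the condition $\re(s_0) < 0$ for all $s_0$ is equivalent to $\re(z_0) < 0$ for all $z_0$, which is the assertion of the lemma. One can phrase the same argument at the level of the two entire functions $F(s) = s - (a+ib) - (c+id)\exp^{-s\tau}$ and $G(z) = z - a - \exp^{-ib\tau}(c+id)\exp^{-z\tau}$, by observing that the identity $F(z + ib) = G(z)$ holds for all $z \in \CC$, so $F$ and $G$ have the same zeros up to the translation by $ib$.

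There is essentially no hard step here; the only points requiring a line of care are (i) confirming that the exponential indeed splits as $\exp^{-s\tau} = \exp^{-ib\tau}\exp^{-z\tau}$ under the substitution, and (ii) remarking that, since both sides of each equation define entire functions of the respective variable, the translation $s \mapsto s - ib$ genuinely matches the \emph{complete} zero sets (and in fact respects multiplicities, although multiplicities are irrelevant to the statement). The substitution $s = z + ib$ itself is exactly the reduction already used implicitly in \cite{Cahlon_Schmidt_2002} and \cite{Noonburg_1969} to normalise the imaginary part of $\lambda$ out of the problem, so the proof amounts to making that normalisation precise.
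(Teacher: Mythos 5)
Your proof is correct and is essentially identical to the paper's own argument: the substitution $s=z+ib$, the identity $\exp^{-s\tau}=\exp^{-ib\tau}\exp^{-z\tau}$, and the observation that $\re(s_0)=\re(z_0)$ are exactly the steps used there. The extra remarks about the entire functions $F$, $G$ and multiplicities are fine but not needed.
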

\begin{proof}
Let $z_0$ be a root of \eqref{eq:complex_expo_poly_lem_2}. Then $s_0=z_0+ib$ is a root of \eqref{eq:complex_expo_poly_lem_1}. Conversly, let $s_0$ be a root of \eqref{eq:complex_expo_poly_lem_1}. Then $z_0=s_0-ib$ is a root of \eqref{eq:complex_expo_poly_lem_2}. As $\re(s_0)=\re(z_0)$ the result follows.
\end{proof}
\begin{rem}
It is worth to mention that a similar yet different simplification is possible. In \cite{Breda_2012} the author based his approach on a version of Lemma~\ref{lem:complex_exponential_poly_equivalent_conditions} where the non-delayed coefficient in \eqref{eq:complex_expo_poly_lem_2} is complex and the one corresponding to the delay is real. For our approach, however, the current version of Lemma~\ref{lem:complex_exponential_poly_equivalent_conditions} is more convenient.
\end{rem}
We will also use the following result concerning parameter $\beta\in\RR$ and real functions
\begin{equation}\label{eq:rational_trig_aux_functions}
L,\,R:[0,\infty)\to\RR, \quad L(r):=\frac{r}{r^2+1},\quad R(r):=\arctan(r)+\beta.
\end{equation}
\begin{lem}\label{lem:rational_trig_eqaution}
Let $\beta\in\RR$ and put
	$$
	A=\{r\in [0,\infty):L(r)\leq R(r)\},
	$$
	where real functions $L$ and $R$ are given by \eqref{eq:rational_trig_aux_functions}. Then: 
	\begin{enumerate}[(i)]
		\item $A=[0,\infty)$ if and only if $\beta\geq 0$,
		\item $A=[r_0,\infty)$ with $r_0>0$  if and only if $\beta\in\left(-\frac{\pi}{2},0\right)$, wherein the correspondence $(0,\infty)\ni r_0\longleftrightarrow \beta\in \left(-\frac{\pi}{2},0\right)$ is one-to-one,
		\item set $A$ is empty if and only if $\beta\leq -\frac{\pi}{2}$.
	\end{enumerate}
\end{lem}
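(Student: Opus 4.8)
The plan is to study the single-variable function $g(r) := R(r) - L(r) = \arctan(r) + \beta - \dfrac{r}{r^2+1}$ on $[0,\infty)$ and extract all the information about $A$ from its monotonicity and limiting behaviour. First I would compute the derivative. Since $\dfrac{d}{dr}\arctan(r) = \dfrac{1}{r^2+1}$ and $\dfrac{d}{dr}\dfrac{r}{r^2+1} = \dfrac{1-r^2}{(r^2+1)^2}$, we get
\[
g'(r) = \frac{1}{r^2+1} - \frac{1-r^2}{(r^2+1)^2} = \frac{(r^2+1) - (1-r^2)}{(r^2+1)^2} = \frac{2r^2}{(r^2+1)^2} \geq 0,
\]
with equality only at $r=0$. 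Hence $g$ is strictly increasing on $[0,\infty)$. Next I would record the two boundary values: $g(0) = \arctan(0) + \beta - 0 = \beta$, and $\lim_{r\to\infty} g(r) = \dfrac{\pi}{2} + \beta - 0 = \dfrac{\pi}{2} + \beta$. So $g$ is a strictly increasing continuous bijection from $[0,\infty)$ onto $[\beta,\, \tfrac{\pi}{2}+\beta)$, and $A = \{r : g(r) \geq 0\}$.

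With this in hand the three cases are immediate. If $\beta \geq 0$ then $g(0) = \beta \geq 0$ and, by monotonicity, $g(r) \geq 0$ for all $r$, so $A = [0,\infty)$; conversely, if $A = [0,\infty)$ then in particular $0 \in A$, i.e. $g(0) = \beta \geq 0$. This proves (i). For (iii): if $\beta \leq -\tfrac{\pi}{2}$ then $\sup_{r} g(r) = \tfrac{\pi}{2} + \beta \leq 0$, and since the supremum is not attained, $g(r) < 0$ for every $r$, so $A = \emptyset$; conversely, if $\beta > -\tfrac{\pi}{2}$ then $g(r) \to \tfrac{\pi}{2}+\beta > 0$, so $g(r) \geq 0$ for all sufficiently large $r$ and $A \neq \emptyset$. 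This gives (iii). Case (ii) is the remaining range $\beta \in (-\tfrac{\pi}{2}, 0)$: here $g(0) = \beta < 0$ and $\lim_{r\to\infty} g(r) = \tfrac{\pi}{2}+\beta > 0$, so by strict monotonicity and the intermediate value theorem there is a unique $r_0 > 0$ with $g(r_0) = 0$, and $A = [r_0,\infty)$.

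The only part needing a little care is the claimed bijective correspondence $r_0 \leftrightarrow \beta$ in (ii). Here I would make the dependence on $\beta$ explicit by writing the zero condition as $\beta = L(r_0) - \arctan(r_0) =: h(r_0)$, where $h(r) = \dfrac{r}{r^2+1} - \arctan(r)$. Then $h(r) = -g(r) + \beta$ evaluated along the zero set is just $h = -(\text{increasing part})$; more directly, $h'(r) = -\dfrac{2r^2}{(r^2+1)^2} < 0$ for $r > 0$, so $h$ is strictly decreasing on $(0,\infty)$ with $h(0^+) = 0$ and $h(r) \to -\tfrac{\pi}{2}$ as $r \to \infty$. Thus $h$ is a strictly decreasing bijection from $(0,\infty)$ onto $\left(-\tfrac{\pi}{2}, 0\right)$, which is exactly the asserted one-to-one correspondence: each $\beta \in \left(-\tfrac{\pi}{2},0\right)$ determines a unique threshold $r_0 = h^{-1}(\beta) \in (0,\infty)$ and vice versa. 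I do not anticipate any real obstacle here; the whole lemma reduces to elementary calculus once one isolates the function $g$, and the main thing to get right is bookkeeping the strict-versus-non-strict inequalities at the endpoints $r=0$ and $r\to\infty$.
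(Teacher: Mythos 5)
Your proposal is correct and follows essentially the same route as the paper: both study the strictly increasing function $R-L$ (your $g$, the paper's $\varphi$), read off $g(0)=\beta$ and the limit $\beta+\tfrac{\pi}{2}$, and split into the three ranges of $\beta$. Your only addition is to make the one-to-one correspondence $r_0\longleftrightarrow\beta$ in (ii) explicit via the strictly decreasing bijection $h(r)=L(r)-\arctan(r)$ from $(0,\infty)$ onto $\left(-\tfrac{\pi}{2},0\right)$, a point the paper leaves implicit.
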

\begin{proof}
	Let us consider function $\varphi:[0,\infty)\to\RR$ given by $\varphi=R-L$. Clearly, we have 
		\[
	\varphi'(r)=\frac{2r^2}{(r^2+1)^2}>0,
	\quad \varphi(0)=\beta, 
	\quad \varphi(r)\longrightarrow\beta+\frac{\pi}{2}
	\quad \rm{as}\quad r\rightarrow\infty. 
	\]	
	In particular $\varphi$ is strictly increasing and $\varphi([0,\infty))=[\beta,\beta+\frac{\pi}{2})$.
	
	If $\beta\geq 0$, then $\varphi(r)\geq 0$ for $r\in [0,\infty)$, i.e.
	$A=[0,\infty)$. On the other hand if $L(0)\leq R(0)$, then $\beta\geq 0$. This gives assertion (i). 
	
	Suppose $\beta\in\left(-\frac{\pi}{2},0\right)$. Hence $\varphi(0)<0$ and $\varphi(r)>0$ for large enough $r>0$. Then there exists a unique $r_0>0$ such that $\varphi(r_0)=0$. This shows that $A=[r_0,\infty)$.
	If now  $A=[r_0,\infty)$ for some $r_0>0$, then $\beta<0$ by (i).  To finish the proof it is enough to notice that for  $\beta\leq\frac{\pi}{2}$ we have $\varphi(r)<0$ for $r\in [0,\infty)$, i.e. $A=\emptyset$.
	\end{proof}
\begin{cor}\label{cor:rational_trig_eqaution_number_of_solutions}
Equation $L(r)=R(r)$ has exactly one solution if and only if $\beta\in(-\frac{\pi}{2},0]$.\qed
\end{cor}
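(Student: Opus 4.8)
The plan is to recycle the auxiliary function $\varphi:=R-L$ introduced in the proof of Lemma~\ref{lem:rational_trig_eqaution}, for which the following was already established: $\varphi'(r)>0$ on $[0,\infty)$, $\varphi(0)=\beta$, and $\varphi(r)\to\beta+\frac{\pi}{2}$ as $r\to\infty$. In particular $\varphi$ is a strictly increasing homeomorphism of $[0,\infty)$ onto the half-open interval $[\beta,\beta+\frac{\pi}{2})$. This is really all the input the corollary needs.

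First I would note that $L(r)=R(r)$ is equivalent to $\varphi(r)=0$, and that strict monotonicity of $\varphi$ means it has \emph{at most} one zero. Hence for this equation "exactly one solution" is the same condition as "at least one solution", and the latter holds precisely when $0$ lies in the range $\varphi([0,\infty))=[\beta,\beta+\frac{\pi}{2})$. That membership amounts to the two inequalities $\beta\le 0$ and $\beta+\frac{\pi}{2}>0$, i.e. to $\beta\in\left(-\frac{\pi}{2},0\right]$, which is exactly the claimed equivalence.

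A fully equivalent route, if one prefers to lean directly on the already-proved trichotomy, is to read off the solution set of $L=R$ from each case of Lemma~\ref{lem:rational_trig_eqaution}: in case (ii) it is the single left endpoint $r_0$ of $A$; in case (i) it is $\{0\}$ when $\beta=0$ but empty when $\beta>0$ (since then $\varphi(r)\ge\varphi(0)=\beta>0$); and in case (iii) it is empty. Collating these gives "exactly one solution" iff $\beta\in(-\frac{\pi}{2},0]$.

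I do not expect a genuine obstacle here: the corollary is an immediate consequence of the monotonicity of $\varphi$ together with the computation of its range, both of which are already available. The only point deserving a moment's attention is the boundary value $\beta=0$, where the unique solution is the endpoint $r=0$ itself and must be included rather than discarded; handling the endpoint correctly is what pins the interval in the statement down to the half-open $(-\frac{\pi}{2},0]$ rather than the open $(-\frac{\pi}{2},0)$.
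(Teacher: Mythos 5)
Your argument is correct and is exactly the reasoning the paper intends: the corollary is stated with no separate proof because it follows immediately from the monotonicity of $\varphi=R-L$ and its range $[\beta,\beta+\frac{\pi}{2})$ established in the proof of Lemma~\ref{lem:rational_trig_eqaution}, which is precisely what you use (your alternative reading via the lemma's trichotomy is the same argument repackaged). Your handling of the endpoint case $\beta=0$, where the unique solution is $r=0$, is also correct.
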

We also make use of the following half-planes
\begin{align*}
\CC_+:=&\{s\in\CC:\re(s)>0\},\quad \CC_-:=\{s\in\CC:\re(s)<0\},\\
\Pi_+:=&\{s\in\CC:\im(s)>0\},\quad \Pi_-:=\{s\in\CC:\im(s)<0\}.
\end{align*}
\section{Main results}
By Lemma~\ref{lem:complex_exponential_poly_equivalent_conditions} we restrict our attention to \eqref{eq:complex_expo_poly_lem_2}. Taking $\eta=u+iv=\exp^{-ib\tau}(c+id)$ the conditions for stability of \eqref{eq:characteristic_equation_general} are given on an $(u,iv)$-complex plane in terms of regions that depend on $a$ and $\tau$.
\begin{rem}
We take the principal argument of $\lambda$ to be $\Arg\lambda\in(-\pi,\pi]$.
\end{rem}
Let $\DD_r\subset\CC$ be an open disc centred at $0$ with radius $r>0$. We shall require the following subset of the complex plane, depending on $\tau>0$ and $a\in(-\infty,\frac{1}{\tau}]$, namely: \index{Greek@\textbf{Greek symbols}!k@$\Lambda_{\tau,a}$}
\begin{itemize}
\item for $a<0$:
\begin{align}\label{eq:lambda_tau_a_neg}
\begin{split}
\Lambda_{\tau,a}:=&\bigg\{\eta\in\CC\setminus\DD_{|a|}:\re{\eta}+a<0,\,|\eta|<|\eta_\pi|,\\
&\abs{\Arg\eta}>\tau\sqrt{\abs{\eta}^2-a^2}+\arctan\Big(-\frac{1}{a}\sqrt{\abs{\eta}^2-a^2}\Big)\bigg\}\cup\DD_{|a|},
\end{split}
\end{align}
where $\eta_\pi$ is such that
\[
\sqrt{\abs{\eta_\pi}^2-a^2}\tau+\arctan\bigg(-\frac{1}{a}\sqrt{\abs{\eta_\pi}^2-a^2}\bigg)=\pi;
\]
\item for $a=0$:
\begin{align}\label{eq:lambda_tau_a_zero}
\Lambda_{\tau,a}:=&\bigg\{\eta\in\CC\setminus\{0\}:\re\eta<0,\, |\eta|<\frac{\pi}{2\tau},\, \abs{\Arg\eta}>\tau\abs{\eta}+\frac{\pi}{2}\bigg\};
\end{align}
\item for $0<a\leq\frac{1}{\tau}$
\begin{align}\label{eq:lambda_tau_a_pos}
\begin{split}
\Lambda_{\tau,a}:=&\bigg\{\eta\in\CC:\re{\eta}+a<0,\,|\eta|<|\eta_\pi|,\\
&\abs{\Arg\eta}>\tau\sqrt{\abs{\eta}^2-a^2}+\arctan\Big(-\frac{1}{a}\sqrt{\abs{\eta}^2-a^2}\Big)+\pi\bigg\},
\end{split}
\end{align}
where $\eta_\pi$ is such that $\abs{\eta_\pi}>a$ and
\[
\sqrt{\abs{\eta_\pi}^2-a^2}\tau+\arctan\bigg(-\frac{1}{a}\sqrt{\abs{\eta_\pi}^2-a^2}\bigg)=0.
\] 
\end{itemize}
Figures \ref{fig:Lambda_tau_a_neg_different_tau}, \ref{fig:Lambda_tau_a_zero_different_tau} and \ref{fig:Lambda_tau_a_pos_different_tau} show $\Lambda_{\tau,a}$ for fixed values of $a$ and varying $\tau$, while Figure \ref{fig:Lambda_tau_a_for_different_a} shows $\Lambda_{\tau,a}$ for fixed $\tau$ and varying $a$. 

\begin{figure}[!h]
    \begin{minipage}[l]{0.48\linewidth}
        \centering
        \includegraphics[scale=0.5]{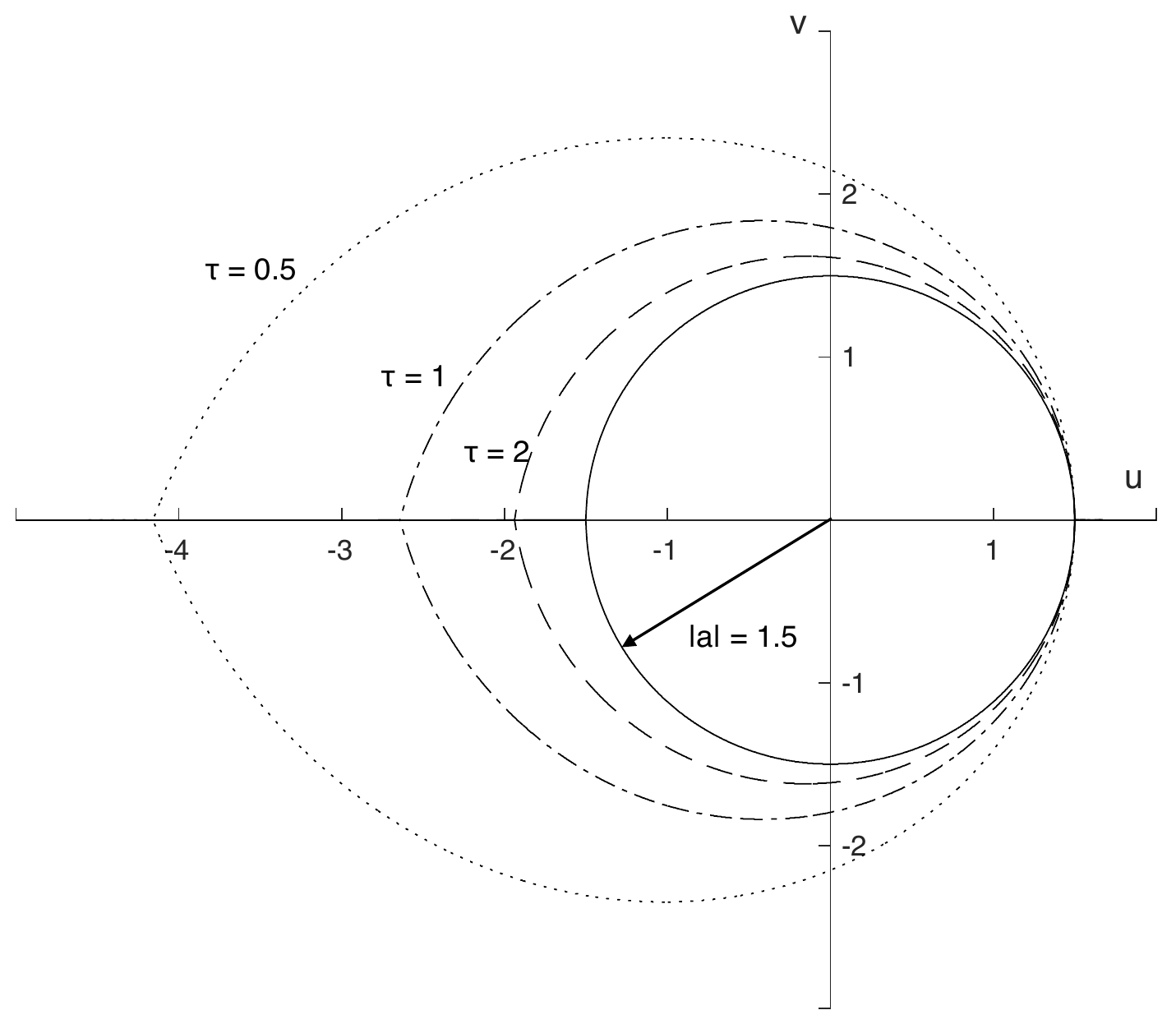} 
        \caption{Outer boundaries of the $\Lambda_{\tau,a}$, defined in \eqref{eq:lambda_tau_a_neg} with $\eta=u+iv$, for $a=-1.5$ and different values of $\tau$: dotted for $\tau=0.5$, dash-dotted for $\tau=1$, dashed for $\tau=2$. The solid line shows a circle with radius $|a|=1.5$.}
        \label{fig:Lambda_tau_a_neg_different_tau}
    \end{minipage}\hfill
    \begin{minipage}[c]{0.48\linewidth}
        \centering
        \includegraphics[scale=0.5]{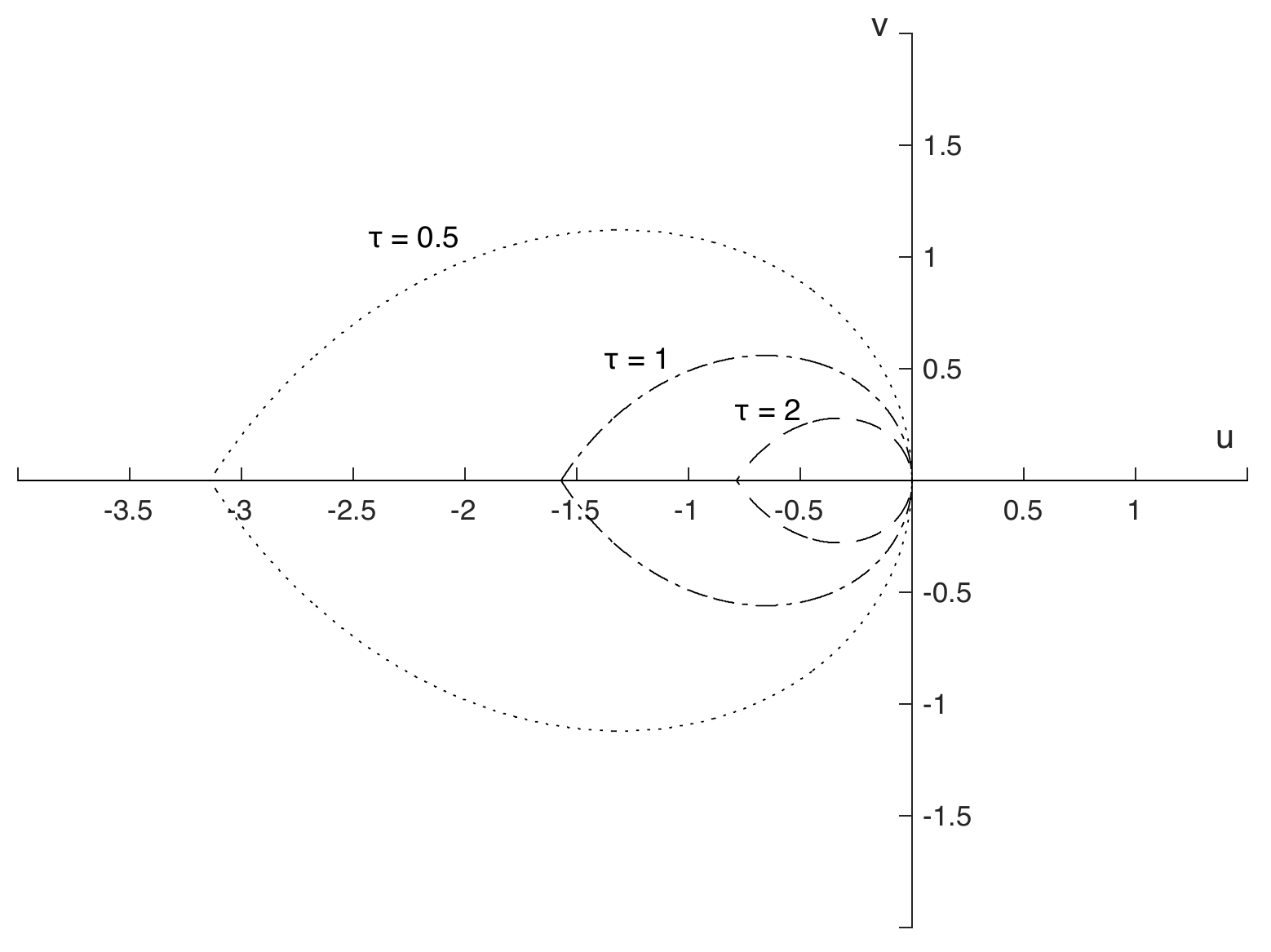} 
        \caption{Outer boundaries of the $\Lambda_{\tau,a}$, defined in \eqref{eq:lambda_tau_a_zero} with $\eta=u+iv$, for $a=0$ and different values of $\tau$: dotted for $\tau=0.5$, dash-dotted for $\tau=1$, dashed for $\tau=2$.}
\label{fig:Lambda_tau_a_zero_different_tau}
    \end{minipage}
\end{figure}

\begin{figure}[!h]
    \begin{minipage}[l]{0.48\linewidth}
        \centering
        \includegraphics[scale=0.5]{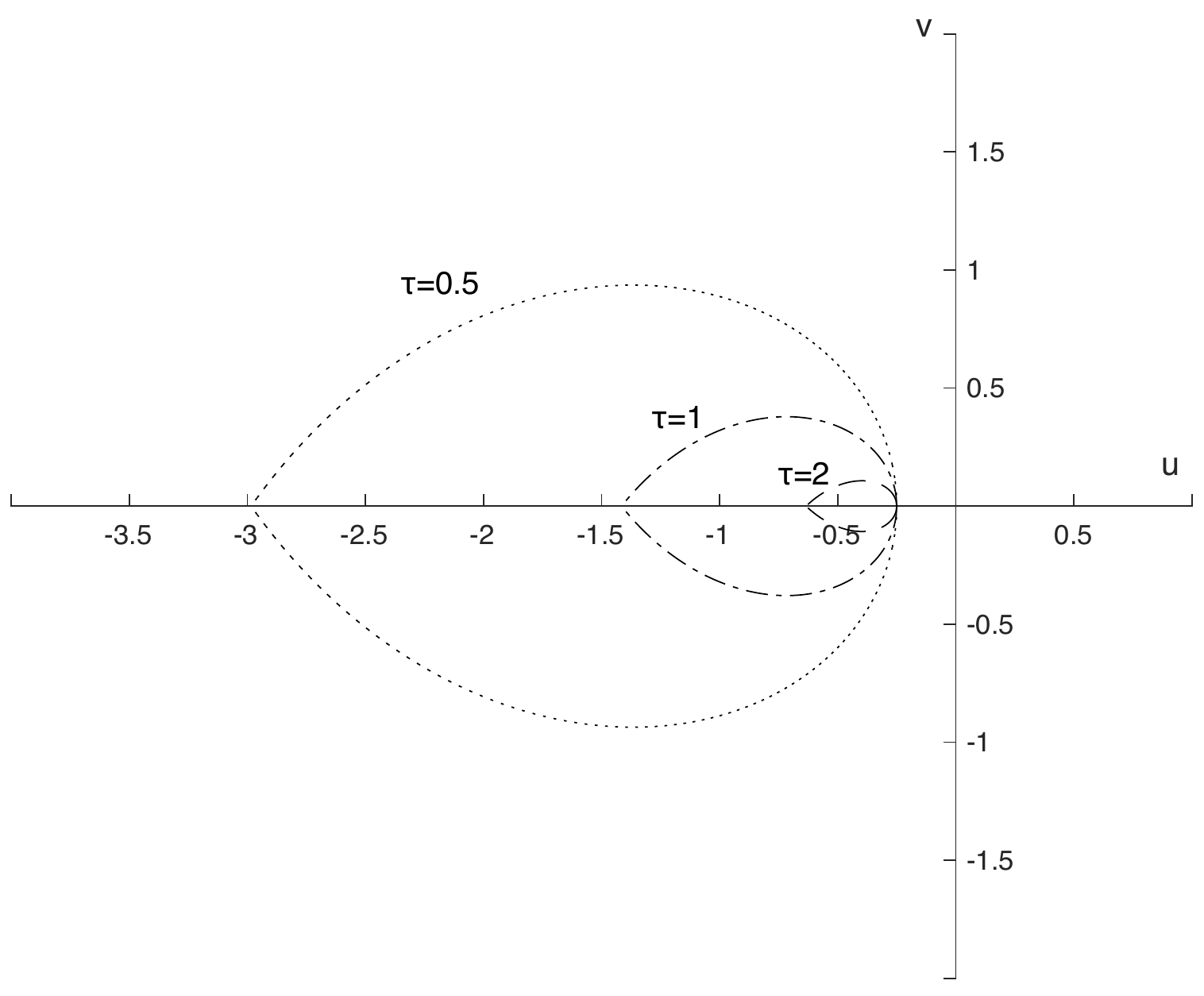} 
        \caption{Outer boundaries of the $\Lambda_{\tau,a}$, defined in \eqref{eq:lambda_tau_a_pos} with $\eta=u+iv$, for $a=0.25$ and different values of $\tau$: dotted for $\tau=0.5$, dash-dotted for $\tau=1$, dashed for $\tau=2$.}
        \label{fig:Lambda_tau_a_pos_different_tau}
    \end{minipage}\hfill
    \begin{minipage}[c]{0.48\linewidth}
        \centering
        \includegraphics[scale=0.5]{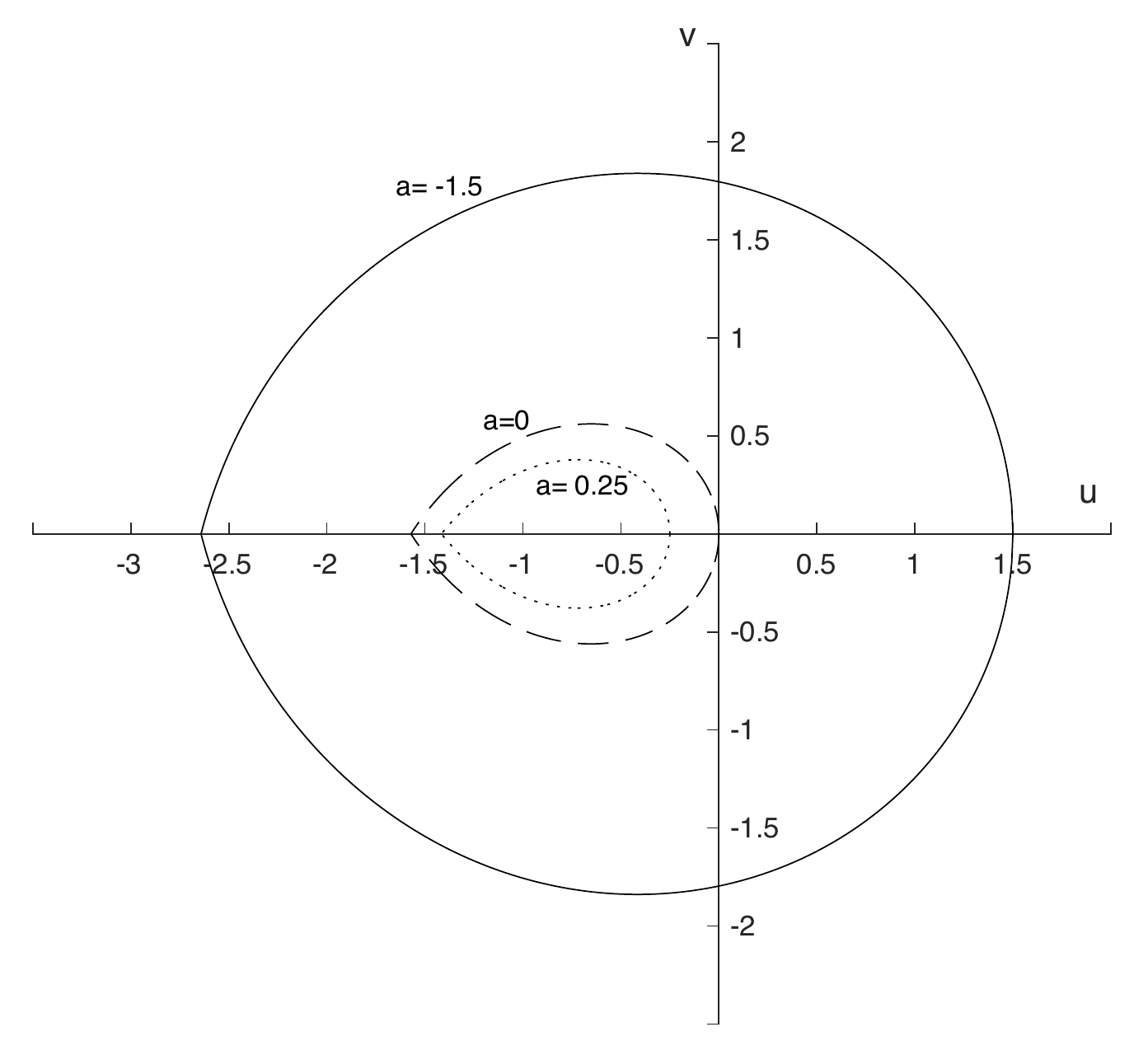} 
        \caption{Outer boundaries of $\Lambda_{\tau,a}$, defined in \eqref{eq:lambda_tau_a_neg}--\eqref{eq:lambda_tau_a_pos} with $\eta=u+iv$, for $\tau=1$ and different values of $a$: solid for $a=-1.5$, dashed for $a=0$ and dotted for $a=0.25$.}
\label{fig:Lambda_tau_a_for_different_a}
    \end{minipage}
\end{figure}

The zeros of \eqref{eq:characteristic_equation_general} are in the left half plane $\CC_-$, according to Lemma~\ref{lem:complex_exponential_poly_equivalent_conditions}, if and only if the roots of  \eqref{eq:complex_expo_poly_lem_2} belong to $\CC_-$. Thus for $\lambda=a+ib$, $\gamma=c+id$ and $\eta:=\exp^{-ib\tau}\gamma$ we have the following

\begin{thm}\label{thm:stability_of_complex_polynomial}
Let $\tau>0$, $a\in\RR$ and $\eta\in\CC$. Then every solution of the equation
\begin{equation}\label{eq:complex_exponential_polynomial_zeros}
s-a-\eta\exp^{-s\tau}=0
\end{equation}
belongs to $\CC\setminus\CC_+$ if and only if $a\leq\frac{1}{\tau}$ and  $\eta$ belongs to the closure of $\Lambda_{\tau,a}$ given by \eqref{eq:lambda_tau_a_neg}, \eqref{eq:lambda_tau_a_zero} or \eqref{eq:lambda_tau_a_pos}.
\end{thm}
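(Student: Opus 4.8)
The plan is to track how the rightmost root of \eqref{eq:complex_exponential_polynomial_zeros} moves as the delay $\tau$ varies, in the spirit of \cite[Proposition 6.2.3]{Partington_2004}. First I would dispose of the necessary condition $a\le\frac1\tau$: when $\eta=0$ the only root is $s=a$, and more generally a purely real analysis (or a continuity argument at $\tau\to0$, where the equation degenerates to $s-a-\eta=0$) shows that if $a>\frac1\tau$ then a real root in $\CC_+$ persists; this is the easy direction. The substance is the description of the admissible $\eta$-set. Here the key move is to locate the \emph{boundary} of stability, i.e.\ the set of $(\eta,\tau,a)$ for which \eqref{eq:complex_exponential_polynomial_zeros} has a root on the imaginary axis. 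Writing $s=i\omega$ with $\omega\in\RR$, the equation splits into its real and imaginary parts:
\begin{equation*}
-a=\re\!\big(\eta e^{-i\omega\tau}\big),\qquad \omega=-\im\!\big(\eta e^{-i\omega\tau}\big).
\end{equation*}
Squaring and adding gives $\omega^2+a^2=|\eta|^2$, so a crossing root has $\omega=\pm\sqrt{|\eta|^2-a^2}$ (forcing $|\eta|\ge|a|$, which explains the annulus/disc split in \eqref{eq:lambda_tau_a_neg}). Then $\Arg\eta$ is pinned down modulo $2\pi$ by $e^{-i\omega\tau}=(-a-i\omega)/\eta$, and after taking arguments and using $\arctan$ one recovers exactly the threshold $\tau\sqrt{|\eta|^2-a^2}+\arctan\!\big(-\frac1a\sqrt{|\eta|^2-a^2}\big)$ appearing in the definition of $\Lambda_{\tau,a}$, with the $+\pi$ shift for $a>0$ coming from the branch of the argument of $-a-i\omega$ (which lies in the left half-plane when $a>0$). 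The role of $\eta_\pi$ is to mark where this threshold first reaches $\pi$ (resp.\ $0$), i.e.\ where the would-be crossing frequency is the first one that matters; beyond $|\eta_\pi|$ a crossing into $\CC_+$ has already occurred.

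Next I would run the continuity/monotonicity argument. Fix $a$ and $\eta$. For $\tau=0$ the unique root is $s=a+\eta$, which lies in $\overline{\CC_-}$ iff $\re\eta+a\le0$ — this is the first defining inequality of $\Lambda_{\tau,a}$. As $\tau$ increases, roots move continuously (they are zeros of an entire function depending analytically on $\tau$, and by a Rouché/argument-principle bound only finitely many can be near the imaginary axis), and the number of roots in $\CC_+$ can change only when a root crosses the imaginary axis, i.e.\ only at the $\tau$-values identified above. I would compute the crossing direction via $\re\frac{ds}{d\tau}\big|_{s=i\omega}$ using implicit differentiation of \eqref{eq:complex_exponential_polynomial_zeros}, namely $\frac{ds}{d\tau}=\frac{-s\eta e^{-s\tau}}{1+\tau\eta e^{-s\tau}}=\frac{-s(s-a)}{1+\tau(s-a)}$, and check that at the relevant first crossing the motion is from $\overline{\CC_-}$ into $\CC_+$. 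Combining: the system is root-free in $\CC_+$ precisely while $\tau$ stays below the first crossing value, and translating "$\tau<\tau_{\mathrm{crit}}(\eta,a)$" into a condition on $\eta$ for fixed $\tau$ yields the inequality $|\Arg\eta|>\tau\sqrt{|\eta|^2-a^2}+\arctan(\cdots)$ together with $|\eta|<|\eta_\pi|$. The inner disc $\DD_{|a|}$ is handled separately: when $|\eta|<|a|$ there is no imaginary crossing at all, so stability is governed entirely by the $\tau=0$ configuration, i.e.\ by $\re\eta+a<0$, which for $a<0$ is automatic on $\DD_{|a|}$ — hence the union with $\DD_{|a|}$ in \eqref{eq:lambda_tau_a_neg}, whereas for $a\ge0$ no such disc appears.

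The boundary cases require care and I would treat them by a limiting argument. The case $a=0$ is the degenerate limit: $\sqrt{|\eta|^2-a^2}=|\eta|$ and $\arctan(-\frac1a\sqrt{|\eta|^2-a^2})\to\frac\pi2$ as $a\uparrow0$, which is exactly why \eqref{eq:lambda_tau_a_zero} has $\frac\pi2$ in place of the $\arctan$ term and the threshold $|\eta|<\frac{\pi}{2\tau}$ in place of $|\eta|<|\eta_\pi|$; I would verify \eqref{eq:lambda_tau_a_zero} directly from the crossing equations with $a=0$ rather than by passing to the limit, then check consistency. The case $a=\frac1\tau$ is the borderline where a root sits at the origin (or the rightmost real root touches $0$), and one must confirm that $\eta$ in $\overline{\Lambda_{\tau,a}}$ — rather than its interior — is the right closed condition; this comes from the fact that roots \emph{on} the imaginary axis are allowed by "$\CC\setminus\CC_+$". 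Corollary~\ref{cor:rational_trig_eqaution_number_of_solutions} and Lemma~\ref{lem:rational_trig_eqaution} enter precisely here, to guarantee that the equation pinning down $\eta_\pi$ (equivalently the crossing-threshold equation, after the substitution $r=\frac1{|a|}\sqrt{|\eta|^2-a^2}$ or similar, which turns it into $L(r)=R(r)$-type) has a unique solution, so that $\Lambda_{\tau,a}$ is well defined and its outer boundary is a single curve.

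\textbf{Main obstacle.} The genuinely delicate part is not the imaginary-axis crossing computation (routine once one is careful with branches of $\arctan$ and $\Arg$), but rather establishing that the \emph{first} crossing as $\tau$ increases is the decisive one and that no root sneaks into $\CC_+$ "from infinity" — i.e.\ a uniform-in-$\tau$ control on root locations. One also has to argue that the threshold function $\tau\mapsto\tau\sqrt{|\eta|^2-a^2}+\arctan(\cdots)$ is monotone in $\tau$ (clear) and, crucially, that for $|\eta|\ge|\eta_\pi|$ the configuration is already unstable at the relevant $\tau$, which forces a careful case analysis of how many times the threshold can be hit before reaching the value $\pi$ (resp.\ $0$), and this is exactly where the one-to-one correspondence $r_0\leftrightarrow\beta$ from Lemma~\ref{lem:rational_trig_eqaution}(ii) is needed. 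Handling the $a>0$ regime, where the $+\pi$ appears and where the disc $\DD_{|a|}$ disappears, without sign errors in the argument of $-a-i\omega$, is the other place demanding attention.
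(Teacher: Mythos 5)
Your skeleton is the same as the paper's (a first-crossing/D-subdivision argument: crossing frequency $\omega^2=|\eta|^2-a^2$, the argument relation along the critical spiral, left-to-right crossing direction from $\re\frac{ds}{d\tau}\big|_{s=i\omega}>0$, the disc $\DD_{|a|}$ handled by absence of crossings), but two essential steps are missing, and one of them rests on a claim that fails. First, the necessity of $a\le\frac1\tau$ is not "the easy direction", and your justification does not work: for complex $\eta$ the equation has no conjugate symmetry, so there is no reason for any real root to exist, let alone for "a real root in $\CC_+$ to persist"; a limit argument at $\tau\to0$ also says nothing about $a>\frac1\tau$ (at $\tau=0$ the root $a+\eta$ can be far in $\CC_-$). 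In the paper this is the outcome of the whole $a>0$ analysis: for $a\tau>1$ the continuous argument increment $\Delta\gamma_+(w)=\sqrt{w^2-a^2}\,\tau+\arctan\big(-\frac1a\sqrt{w^2-a^2}\big)+\pi$ is strictly increasing from its minimum $\pi$ at $w=a$, and combined with the exclusion argument for $\eta$ with $\frac\pi2<|\Arg\eta|<\min\Delta\gamma_+$ one concludes that no $\eta$ keeps all roots out of $\CC_+$ (even $\eta=-a$ fails). Your proposal has no substitute for this.

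Second, the sentence "translating $\tau<\tau_{\mathrm{crit}}(\eta,a)$ into a condition on $\eta$ for fixed $\tau$ yields the inequality" is precisely where the proof lives, and you assert it rather than prove it. The crossing computation only identifies the critical curves $\Gamma_\pm$; to conclude that every $\eta$ inside the first turn is stable and everything outside is not, the paper fixes a ray from the origin and studies the first-crossing delay $\tau(t)$, $t=|\eta|$, along that ray. For $a<0$ one shows $\frac{d\tau}{dt}(t)=\frac{t}{t^2-a^2}\big(\frac a{t^2}-\tau(t)\big)<0$, so $\tau$ is monotone along rays, and a separate comparison argument discards the outer turns of the spiral (points where $|\eta|>|\eta_\pi|$): a point of smaller modulus and equal argument has already sent a root into $\CC_+$. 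For $0<a\le\frac1\tau$ the delay along a ray is \emph{not} monotone; the paper shows it has exactly one interior critical point by substituting $r=\frac1a\sqrt{t^2-a^2}$ and invoking Corollary~\ref{cor:rational_trig_eqaution_number_of_solutions} with $\beta=\Arg\eta-\pi\in(-\frac\pi2,0]$, and then needs an additional limiting/contradiction argument to exclude the angular sector between $\frac\pi2$ and $\Delta\gamma_+(\sqrt{a/\tau})$. You do cite Lemma~\ref{lem:rational_trig_eqaution}, but for the wrong task (uniqueness of $\eta_\pi$, which for $a<0$ is immediate from monotonicity of $\Delta\gamma_+$); its real role is this count of critical points of the crossing-delay function, which your sketch does not reconstruct. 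Minor but worth fixing: your crossing equations have sign slips — from $i\omega-a=\eta e^{-i\omega\tau}$ one gets $\omega=\im\big(\eta e^{-i\omega\tau}\big)$ and $e^{-i\omega\tau}=(i\omega-a)/\eta$, not the expressions you wrote.
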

\begin{proof}
\begin{itemize} 
\item[1.] Denote the closure of $\Lambda_{\tau,a}$ by $\overbar{\Lambda_{\tau,a}}$. For any $\tau>0$ and $a\leq0$ there is $0\in\overbar{\Lambda_{\tau,a}}$ and taking $\eta=0$ the statement of the proposition obviously holds true, while for $0<a\leq\frac{1}{\tau}$ we have $0\not\in\overbar{\Lambda_{\tau,a}}$. Thus for the remainder of the proof assume that $\eta\neq0$.

\item[2.] It is know that for $\tau>0$ equation \eqref{eq:complex_exponential_polynomial_zeros} has infinitely many solutions. By the Rouch{\'e}'s theorem (see e.g. \cite[Prop.1.14]{Michiels_Niculescu_2014}) solutions of \eqref{eq:complex_exponential_polynomial_zeros} vary continuously with $\tau$, except at $\tau=0$ where only one remains. Let $\eta=u+iv$ and let $s=x+iy$. In the limit as $\tau\rightarrow 0$  in \eqref{eq:complex_exponential_polynomial_zeros} we obtain
\[x=a+u,\quad y=v,
\]
and the solutions start in $\CC\setminus\CC_+$ i.e. with $x\leq0$ if and only if $a+u\leq0$. 

Let us establish when at least one of the solutions crosses the imaginary axis for the first time as $\tau$ increases from zero upwards. At the crossing of the imaginary axis there is $s=i\omega$ for some $\omega\in\RR$. In view of \eqref{eq:complex_exponential_polynomial_zeros} we can treat $s$ as an implicit function of $\tau$ and check the direction in which zeros of it cross the imaginary axis  by analysing the $\sgn\re\frac{ds}{d\tau}$ if $s=i\omega$. By calculating the implicit function derivative we have
	\[
	\frac{ds}{d\tau}=-\frac{s^2-as}{1-a\tau+s\tau}.
	\]
  As $\sgn\re z=\sgn\re z^{-1}$ we have if $s=i\omega$ that
  	\[
  	\sgn\re\frac{ds}{d\tau}=\sgn\frac{1}{\omega^2+a^2}>0
	\]
	and the zeros cross from the left to the right half-plane. As the sign of the above does not depend on $\tau$, the direction of the crossing remains the same for every value of $\tau$. Thus with $\eta=u+iv$ a necessary condition for the solutions of \eqref{eq:complex_exponential_polynomial_zeros} to be in $\CC\setminus\CC_+$ is
\begin{equation}\label{eq:stability_of_complex_poly_necessary_condition}
a+u\leq0.
\end{equation}
\item[3.] Consider again \eqref{eq:complex_exponential_polynomial_zeros} with fixed $\tau>0$ and $a\in\RR$ and take such $\eta\in\CC$ that \eqref{eq:stability_of_complex_poly_necessary_condition} holds. Let us focus on the crossing point i.e. let $s=i\omega$ for some $\omega\in\RR$. Taking the complex conjugate of \eqref{eq:complex_exponential_polynomial_zeros} at the crossing we obtain
  \begin{equation}\label{eq:complex_conjugate_exponential_polynomial_zeros}
  -i\omega-a-\bar{\eta}\exp^{i\omega\tau}=0.
  \end{equation}
 
Using now \eqref{eq:complex_exponential_polynomial_zeros} for $s=i\omega$ and \eqref{eq:complex_conjugate_exponential_polynomial_zeros} to eliminate the exponential part we have $  \omega^2=\abs{\eta}^2-a^2$. From here we see that for a given $a\in\RR$ and every $\eta=u+iv$ satisfying both, \eqref{eq:stability_of_complex_poly_necessary_condition} and $\abs{\eta}<\abs{a}$,
%
%
the crossing does not exist, regardless of $\tau$, and all the solutions of \eqref{eq:complex_exponential_polynomial_zeros} are in $\CC\setminus\CC_+$. 
\item[4.] Let us focus on the case when the first crossing happens. To that end consider \eqref{eq:complex_exponential_polynomial_zeros}, fix $a\in\RR$ and take $\eta$ such that \eqref{eq:stability_of_complex_poly_necessary_condition} and
  \begin{equation}\label{eq:eta_condition_2}
  \abs{\eta}\geq\abs{a}
  \end{equation}

hold. By point 2 as $\tau$ increases the roots of \eqref{eq:complex_exponential_polynomial_zeros} move continuously to the right. Denote by $\tau_0$ the smallest $\tau$ for which the crossing happens. By assumptions we know that such $\tau_0$ exists. By point 3 the crossing takes place at $s=\pm i\sqrt{\abs{\eta}^2-a^2}$. Putting $s=i\sqrt{\abs{\eta}^2-a^2}$  into \eqref{eq:complex_exponential_polynomial_zeros}  with $\tau=\tau_0$ gives 
 \begin{equation}\label{eq:stability_of_complex_poly_step_1+}
 \begin{split}
  \eta=&-a\exp^{i\sqrt{\abs{\eta}^2-a^2}\tau_0}+\sqrt{\abs{\eta}^2-a^2}\exp^{i(\frac{\pi}{2}+\sqrt{\abs{\eta}^2-a^2}\tau_0)}\\
   =&\exp^{i\sqrt{\abs{\eta}^2-a^2}\tau_0}\left(-a+i\sqrt{\abs{\eta}^2-a^2}\right),
  \end{split}
  \end{equation}
Putting $s=-i\sqrt{\abs{\eta}^2-a^2}$ into \eqref{eq:complex_exponential_polynomial_zeros}  gives an equation corresponding to \eqref{eq:stability_of_complex_poly_step_1+}, namely
\begin{equation}\label{eq:stability_of_complex_poly_step_1-}
 \eta=\exp^{-i\sqrt{\abs{\eta}^2-a^2}\tau_0}\left(-a-i\sqrt{\abs{\eta}^2-a^2}\right).
\end{equation}
Equations  \eqref{eq:stability_of_complex_poly_step_1+} and \eqref{eq:stability_of_complex_poly_step_1-} show the relation between all coefficients (or parameters) of \eqref{eq:complex_exponential_polynomial_zeros} in the boundary case of transition between asymptotic stability and instability. Thus we focus on  the triple $(\tau_0,a,\eta)$ and how changes within it influence stability of 
\begin{equation}\label{eq:complex_exponential_polynomial_zeros_tau0}
s-a-\eta\exp^{-s\tau_0}=0.
\end{equation}
By point 2, for $a$ and $\eta$ as in \eqref{eq:stability_of_complex_poly_step_1+} and with every $\tau>\tau_0$ equation \eqref{eq:complex_exponential_polynomial_zeros_tau0} is unstable, while for $\tau<\tau_0$ it is stable. And so we turn our attention to $\eta$.  

\item[5.] Let $\eta$ be a solution of \eqref{eq:stability_of_complex_poly_step_1+}. Then $\overbar{\eta}$ is a solution of \eqref{eq:stability_of_complex_poly_step_1-} and these solutions are obviously symmetric about the real axis. It will be more convenient to use different notation that the one in \eqref{eq:stability_of_complex_poly_step_1+} or \eqref{eq:stability_of_complex_poly_step_1-}. Define $\gamma_+:[|a|,\infty)\to\CC$ and $\gamma_-:[|a|,\infty)\to\CC$ as the right side of \eqref{eq:stability_of_complex_poly_step_1+} and \eqref{eq:stability_of_complex_poly_step_1-}, respectively,  i.e.
\begin{equation}\label{eq:stability_of_complex_poly_gamma+_path}
\gamma_+(w):=\exp^{i\sqrt{w^2-a^2}\tau_0}\left(-a+i\sqrt{w^2-a^2}\right),
\end{equation}
\begin{equation}\label{eq:stability_of_complex_poly_gamma-_path}
\gamma_-(w):=\exp^{-i\sqrt{w^2-a^2}\tau_0}\left(-a-i\sqrt{w^2-a^2}\right).
\end{equation}
Let $\Gamma_+:=\gamma_+([|a|,\infty))$ be the image of \eqref{eq:stability_of_complex_poly_gamma+_path} and $\Gamma_-:=\gamma_-([|a|,\infty))$ be the image of \eqref{eq:stability_of_complex_poly_gamma-_path}. We easily see that $\gamma_+(w)=\overbar{\gamma_-(w)}$ for every $w\geq |a|$ and so $\Gamma_+$ is symmetric to $\Gamma_-$ about the real axis. 

Up to this moment all considerations in points 2--5 were done regardless of the sign of parameter $a$. In the reminder of the proof, along with \eqref{eq:stability_of_complex_poly_necessary_condition} and \eqref{eq:eta_condition_2}, we will consider additional assumptions on $a$, namely $a<0$, $a=0$, $a\in(0,\frac{1}{\tau_0}]$ and $a>\frac{1}{\tau_0}$.

\item[6.] Assume additionally that $a<0$ and let a function describing a continuous argument increment of \eqref{eq:stability_of_complex_poly_gamma+_path} be given by $\Delta\gamma_+:[|a|,\infty)\to[0,\infty)$,
\begin{equation}\label{eq:stability_of_complex_poly_gamma+_argument}
\Delta\gamma_+(w)=\sqrt{w^2-a^2}\tau_0+\arctan\bigg(-\frac{1}{a}\sqrt{w^2-a^2}\bigg).
\end{equation}
We easily see that it is a strictly increasing, non-negative function. We also define $\Delta\gamma_-:[|a|,\infty)\to(-\infty,0]$ and $\Delta\gamma_-(w)=-\Delta\gamma_+(w)$ for every $w\in[|a|,\infty)$.

Looking at \eqref{eq:stability_of_complex_poly_gamma+_path} note that the first component has modulus $1$ and introduces counter-clockwise rotation, while the second component is always in the first quadrant, with a positive real part equal to $-a$, and its modulus is strictly increasing and tends to infinity as $w\to\infty$. Thus $\Gamma_+$ is a curve that is a counter-clockwise outward spiral that begins in $-a\in\CC$. An exemplary pair of $\Gamma_+$ and $\Gamma_-$ curves is shown in Fig. \ref{fig:Gamma_curves}. 

\begin{figure}[!b]
    \begin{minipage}[l]{0.48\linewidth}
        \centering
        \includegraphics[scale=0.63]{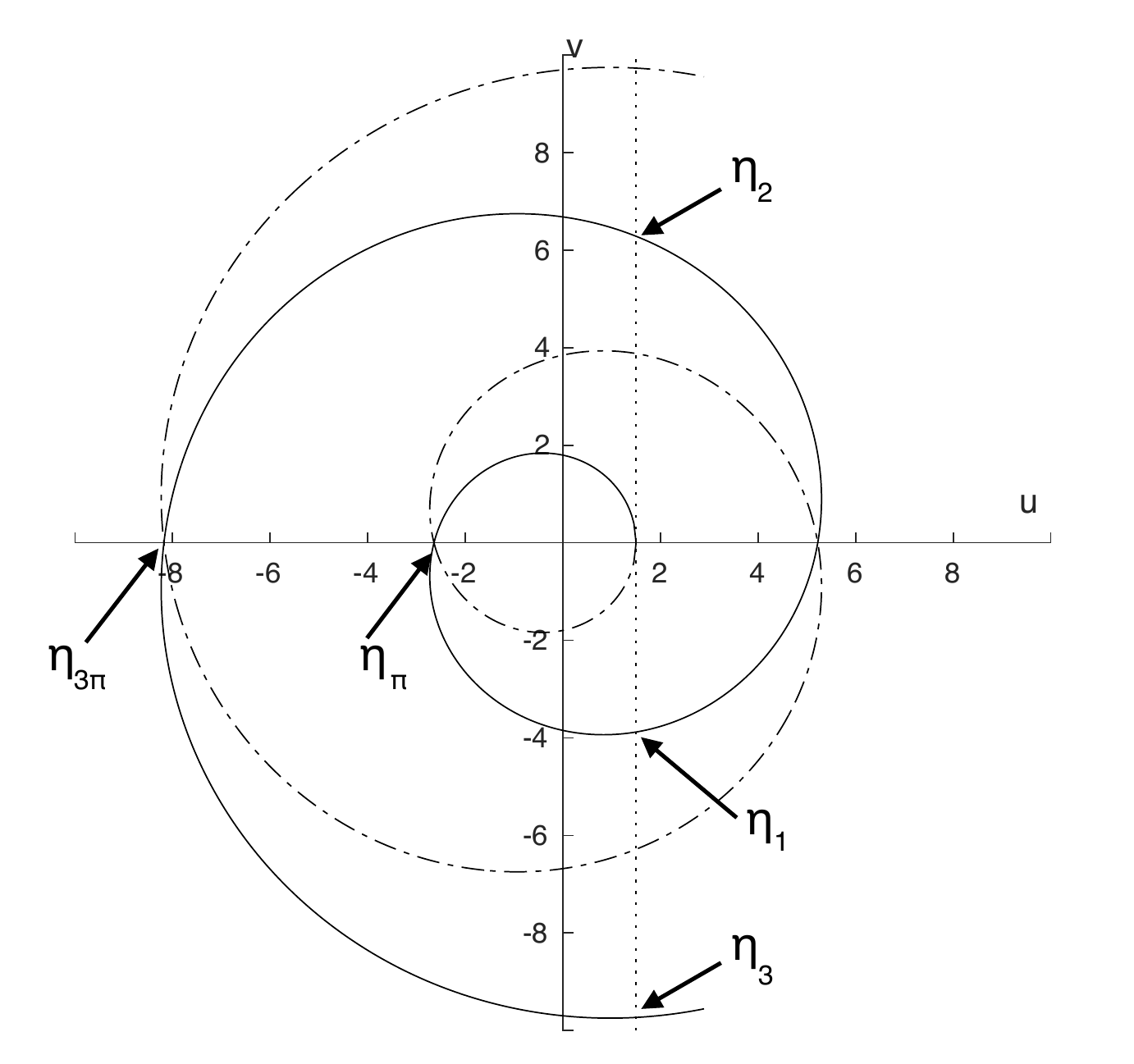} 
        \caption{Curves $\Gamma_+$ (solid line) and $\Gamma_-$ (dash-dotted line) drawn for $\tau_0=1$ and $a=-1.5$ with $|\eta|=w\in(|a|,10)$. The constraint related to $a$ and expressed by \eqref{eq:stability_of_complex_poly_necessary_condition} is marked with a dotted line. The crossings of the real negative semi-axis by $\Gamma_+$ (and $\Gamma_-$) are at $\eta_\pi$ and $\eta_{3\pi}$. The crossings of $u=-a$ by $\Gamma_+$, as $w$ increases, are at $\eta_1$, $\eta_2$ and $\eta_3$.}
      \label{fig:Gamma_curves}
    \end{minipage}\hfill
    \begin{minipage}[c]{0.48\linewidth}
        \centering
        \includegraphics[scale=0.5]{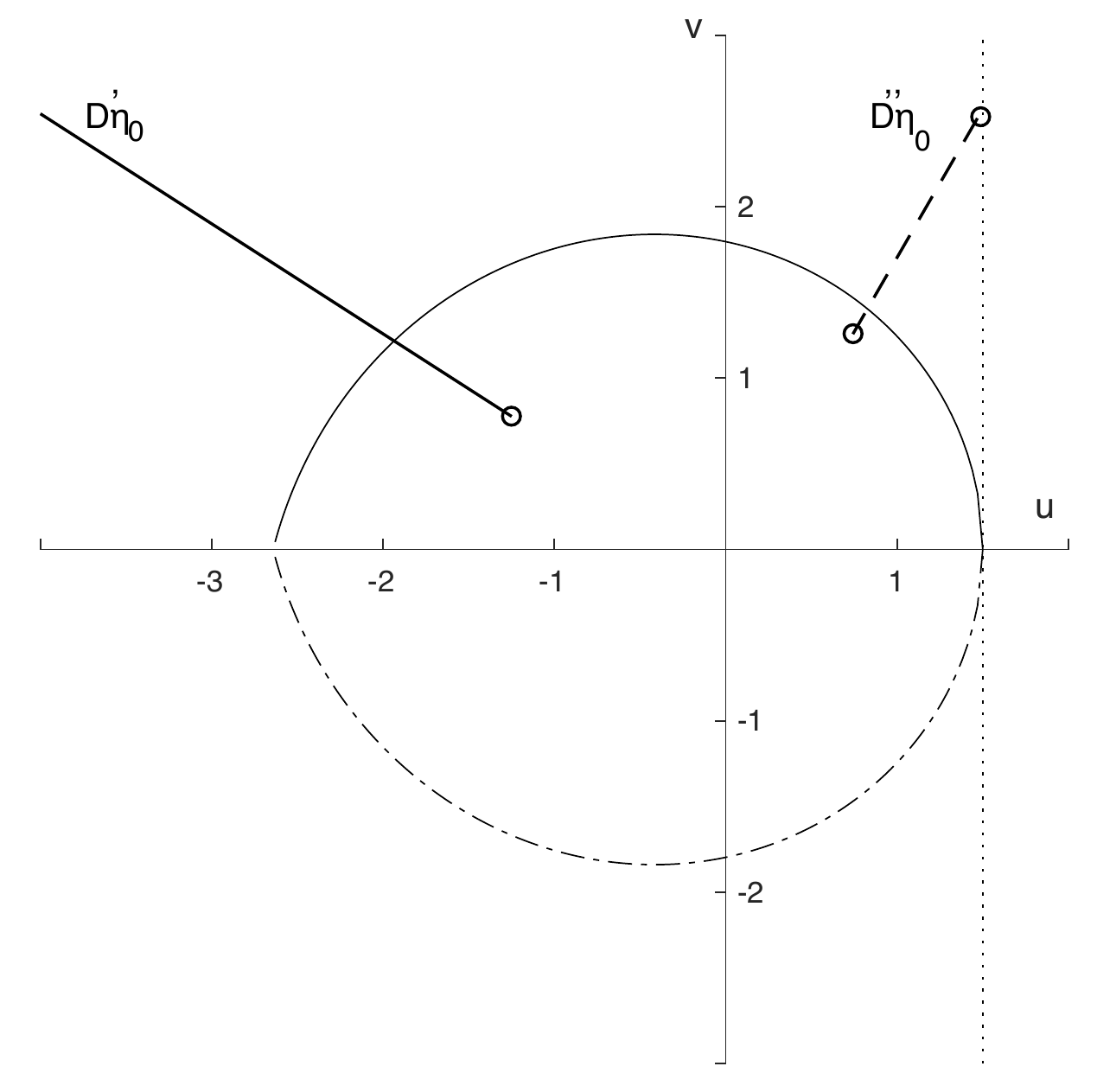} 
        \caption{Enlargement of the central part of Fig.~\ref{fig:Gamma_curves} with $\gamma_+([|a|,|\eta_\pi|])\cup \gamma_-([|a|,|\eta_\pi|])$ with two cases $D'_{\eta_0}$ (solid line) and $D''_{\eta_0}$ (dashed line). Note that $D''_{\eta_0}$ is bounded due to \eqref{eq:stability_of_complex_poly_necessary_condition}.}
\label{fig:Gamma_curves_zoom}
    \end{minipage}
\end{figure}

\item[7.] Let a set $\{\eta_{(2k-1)\pi}\}_{k\in\NN}$ be such that the argument increment along $\Gamma_+$ as $w$ changes from $|a|$ to $|\eta_{(2k-1)\pi}|$ is equal to $(2k-1)\pi$, that is
\begin{equation}\label{eq:stability_of_complex_poly_gamma+_arg_increment}
\Delta\gamma_+\left(|\eta_{(2k-1)\pi}|\right)=(2k-1)\pi.
\end{equation}
Due to constraint \eqref{eq:stability_of_complex_poly_necessary_condition} we take into account only these parts of $\Gamma_+$ (or $\Gamma_-$) that lie to the left of $u=-a$ line, as depicted in Figs. \ref{fig:Gamma_curves} and \ref{fig:Gamma_curves_zoom}. Let us now focus on the closure of the first part of $\Gamma_+$ that lies in $\Pi_+$ i.e. $\gamma_+([\,|a|,|\eta_\pi|\,])$. By \eqref{eq:stability_of_complex_poly_gamma+_argument} and \eqref{eq:stability_of_complex_poly_gamma+_arg_increment} for every $w\in[|\,a|,|\eta_\pi|\,]$ we have $\Delta\gamma_+(w)\in[0,\pi]$. For the case of the part of $\Gamma_-$ equal to $\gamma_-([\,|a|,|\eta_\pi|\,])$ the argument expression gives $\Delta\gamma_-(w)=-\Delta\gamma_+(w)$. Putting both cases together and returning to the notation of \eqref{eq:stability_of_complex_poly_step_1+} and \eqref{eq:stability_of_complex_poly_step_1-}, our equation of interest becomes
\begin{equation}\label{eq:lambda_tau_a_equality}
\abs{\Arg\eta}=\sqrt{\abs{\eta}^2-a^2}\tau_0+\arctan\bigg(-\frac{1}{a}\sqrt{\abs{\eta}^2-a^2}\bigg),\quad|\eta|\leq|\eta_\pi|,
\end{equation}
where $\eta_\pi$ is such that
\[
\Delta\gamma_+(|\eta_\pi|)=\sqrt{\abs{\eta_\pi}^2-a^2}\tau_0+\arctan\bigg(-\frac{1}{a}\sqrt{\abs{\eta_\pi}^2-a^2}\bigg)=\pi.
\]

\item[8.] The set of all $\eta\in\CC$ that satisfy \eqref{eq:lambda_tau_a_equality} is the boundary of the $\Lambda_{\tau_0,a}$ region - see Fig.~\ref{fig:Lambda_tau_a_neg_different_tau} for its shape. To show that for every $\eta$ inside this boundary the roots of \eqref{eq:complex_exponential_polynomial_zeros_tau0} are in $\CC\setminus\CC_+$ consider the following. For every $\eta$ in the half-plane $\{u+iv\in\CC:\ a+u<0\}$ simple geometric considerations show that there exists exactly one $\eta_0$ fulfilling \eqref{eq:lambda_tau_a_equality} and such that $\Arg\eta=\Arg\eta_0$. Conversely, let us fix $\eta_0$ fulfilling \eqref{eq:lambda_tau_a_equality} and consider a function $\tau=\tau(|\eta|)$ defined on a ray from the origin and passing through $\eta_0$. More precisely, define 
\[
D_{\eta_0}:=\{\eta=u+iv\in\CC:\abs{\eta}>\abs{a}\ \hbox{and}\ a+u<0\ \hbox{and}\ \Arg\eta=\Arg\eta_0\}
\]
and let $D_{\eta_0}^t:=\{t\geq0:t=|\eta|,\, \eta\in D_{\eta_0}\}$. Now reformulate the equality in \eqref{eq:lambda_tau_a_equality}  to express $\tau$ as a function $\tau:D_{\eta_0}^t\to (0,\infty)$,
\begin{equation}\label{eq:tau_as_function_of_abs_eta}
\tau(t)=\frac{\arctan\bigg(\frac{1}{a}\sqrt{t^2-a^2}\bigg)+\abs{\Arg\eta_0}}{\sqrt{t^2-a^2}}.
\end{equation}
This is a well-defined positive continuous function. Indeed, for positivity note that for $u\leq0$ there is $|\Arg\eta_0|\geq\frac{\pi}{2}$, while for $u\in(0,-a)$ consider the following trigonometric identity
\begin{equation*}
\begin{split}
&\arctan\bigg(\frac{1}{a}\sqrt{t^2-a^2}\bigg)+\abs{\Arg\eta_0}
=\arctan\bigg(\frac{1}{a}\sqrt{t^2-a^2}\bigg)+\arctan\bigg(\Big|\frac{v}{u}\Big|\bigg)\\
&=\arctan\Bigg(\frac{\frac{u}{a}\sqrt{t^2-a^2}+|v|}{u-\frac{1}{a}\sqrt{t^2-a^2}|v|}\Bigg)
\end{split}
\end{equation*}
and the estimation $\frac{u}{a}\sqrt{t^2-a^2}>-\sqrt{u^2+v^2-a^2}>-|v|$. The derivative of \eqref{eq:tau_as_function_of_abs_eta} is given by
\begin{equation}\label{eq:tau_as_function_derivative}
\frac{d\tau}{dt}(t)=\frac{t}{t^2-a^2}\bigg(\frac{a}{{t^2}}-\tau(t)\bigg).
\end{equation} 
As $a<0$ we have $\frac{d\tau}{dt}<0$ for every $t\in D_{\eta_0}^t$ and $\tau$ is a decreasing function. Thus for every $\eta\in D_{\eta_0}$ such that $\abs{\eta}\leq\abs{\eta_0}$ we have $\tau(|\eta|)\geq\tau(|\eta_0|)=\tau_0$, that is
\begin{equation}\label{eq:lambda_tau_a_condition}
\abs{\Arg\eta}\geq\sqrt{\abs{\eta}^2-a^2}\tau_0-\arctan\bigg(\frac{1}{a}\sqrt{\abs{\eta}^2-a^2}\bigg), \quad|\eta|\leq|\eta_\pi|.
\end{equation}

As the above is true for every $\eta_0$ fulfilling \eqref{eq:lambda_tau_a_equality}, condition  \eqref{eq:lambda_tau_a_condition} is true for every $\eta\in\Lambda_{\tau_0,a}\setminus\{\eta\in\CC:|\eta|\leq |a|\}$. Stated otherwise, for a given $\eta'\in\Lambda_{\tau_0,a}\setminus\{\eta\in\CC:|\eta|\leq |a|\}$ the time $\tau'$ for this $\eta'$ to be such that the first root of \eqref{eq:complex_exponential_polynomial_zeros_tau0} reaches the imaginary axis is bigger than $\tau_0$. This also gives that $\tau'>\tau_0$ implies $\Lambda_{\tau',a}\subset\Lambda_{\tau_0,a}$, as shown in Fig.~\ref{fig:Lambda_tau_a_neg_different_tau}.

\item[9.] Results of the previous point show that the only parts of $\Gamma_+$ and $\Gamma_-$ that we need to consider are the ones already discussed i.e. $\gamma_+([|a|,|\eta_\pi|])\cup \gamma_-([|a|,|\eta_\pi|])$.

Indeed, let $\eta_k$, $k=1,2,\dots$ be consecutive points where $\Gamma_+$ crosses the constraint line $u=-a$, as depicted in Figs. \ref{fig:Gamma_curves} and \ref{fig:Gamma_curves_zoom}. Then for every 
\[
\eta_+\in\gamma_+(|\eta_\pi|,|\eta_1|)\cup\gamma_+([|\eta_{2k}|,|\eta_{(2k+1)}|]),\quad k\in\NN
\]
there exists
\[
\eta_0\in\gamma_+([|a|,|\eta_\pi|])\cup \gamma_-([|a|,|\eta_\pi|])
\]
such that \[
\Arg\eta_0=\Arg\eta_+\quad\hbox{and}\quad|\eta_0|<|\eta_+|.
\] 
The result of point 8 now gives a contradiction as $\tau_0$ cannot be the smallest delay for which the first crossing happens. In fact, although \eqref{eq:stability_of_complex_poly_step_1+} still describes \eqref{eq:complex_exponential_polynomial_zeros_tau0} with a root corresponding to $\eta_+$ at the imaginary axis, at least one root of \eqref{eq:complex_exponential_polynomial_zeros_tau0} - the one corresponding to $\eta_0$ - is already in $\CC_+$. The same argument holds for $\Gamma_-$.

\item[10.] It is easy to see that  for $|\eta|=|a|$ estimation \eqref{eq:lambda_tau_a_condition} is true and thus the closed disc $\{\eta\in\CC:|\eta|\leq |a|\}\subset\overbar{\Lambda_{\tau_0,a}}$. Taking into account that for the interior of this disc the roots of \eqref{eq:complex_exponential_polynomial_zeros_tau0} are in $\CC\setminus\CC_+$ (see point 3),
we reach the necessity of the condition $\eta\in \overbar{\Lambda_{\tau_0,a}}$ for $a<0$.

\item[11.] Let now $a=0$ and let $\tau_0>0$ be as before (considerations in points 1 and 2 remain the same). The crossing takes place at $s=\pm i|\eta|$. Equation
\begin{equation}\label{eq:lambda_tau_zero_equality}
\abs{\Arg\eta}=|\eta|\tau_0+\frac{\pi}{2}
\end{equation}
comes now directly from \eqref{eq:stability_of_complex_poly_step_1+}. The analysis of points 5--9 simplifies greatly resulting in a necessity condition of the form 
\begin{equation}\label{eq:lambda_tau_zero_condition}
\abs{\Arg\eta}\geq|\eta|\tau_0+\frac{\pi}{2},\quad |\eta|<\frac{\pi}{2\tau_0}.
\end{equation}
\item[12.] Assume now $0<a$. Equations \eqref{eq:stability_of_complex_poly_gamma+_path} and \eqref{eq:stability_of_complex_poly_gamma-_path} have the same form. The difference now is that the second product term in \eqref{eq:stability_of_complex_poly_gamma+_path} is constantly in the second quadrant, with a negative real part $-a$ and imaginary part tending to $+\infty$ as $w\to\infty$. This changes e.g. the behaviour of the continuous argument increment function $\Delta\gamma_+$, as it is in general no longer strictly increasing. 

In fact for $0<a$ we have $\Delta\gamma_+:[a,\infty)\to[0,\infty)$,
\begin{equation}\label{eq:stability_of_complex_poly_gamma+_argument_a_pos}
\Delta\gamma_+(w)=\sqrt{w^2-a^2}\tau_0+\arctan\bigg(-\frac{1}{a}\sqrt{w^2-a^2}\bigg)+\pi
\end{equation}
and $\Delta\gamma_-:[a,\infty)\to[0,\infty)$, $\Delta\gamma_-(w)=-\Delta\gamma_+(w)$ for every $w\geq a$. As \eqref{eq:stability_of_complex_poly_gamma+_argument_a_pos} is a differentiable function its derivative is 
\begin{equation}\label{eq:gamma+_argument_a_pos_derivative}
\frac{d\Delta\gamma_+}{dw}(w)=\frac{w}{\sqrt{w^2-a^2}}\bigg(\tau_0-\frac{a}{w^2}\bigg).
\end{equation}
We have 
\begin{equation}\label{eq:gamma+_argument_a_pos_derivative_negative}
\frac{d\Delta\gamma_+}{dw}(w)<0\quad\hbox{if and only if}\quad w<w_m:=\sqrt{\frac{a}{\tau_0}},
\end{equation}
Taking into account the domain of \eqref{eq:stability_of_complex_poly_gamma+_argument_a_pos} i.e. $a\leq w$, we see that for $a\in(0,\frac{1}{\tau_0})$ function $\Delta\gamma_+$ is firstly decreasing, reaching a local minimum $\Delta\gamma_+(w_m)>\frac{\pi}{2}$, and then it is increasing to infinity; while for $a>\frac{1}{\tau_0}$ it is strictly increasing. These two cases are analysed separately.

\begin{figure}
    \centering
    \begin{minipage}{0.5\textwidth}
        \centering
        \includegraphics[width=0.9\textwidth]{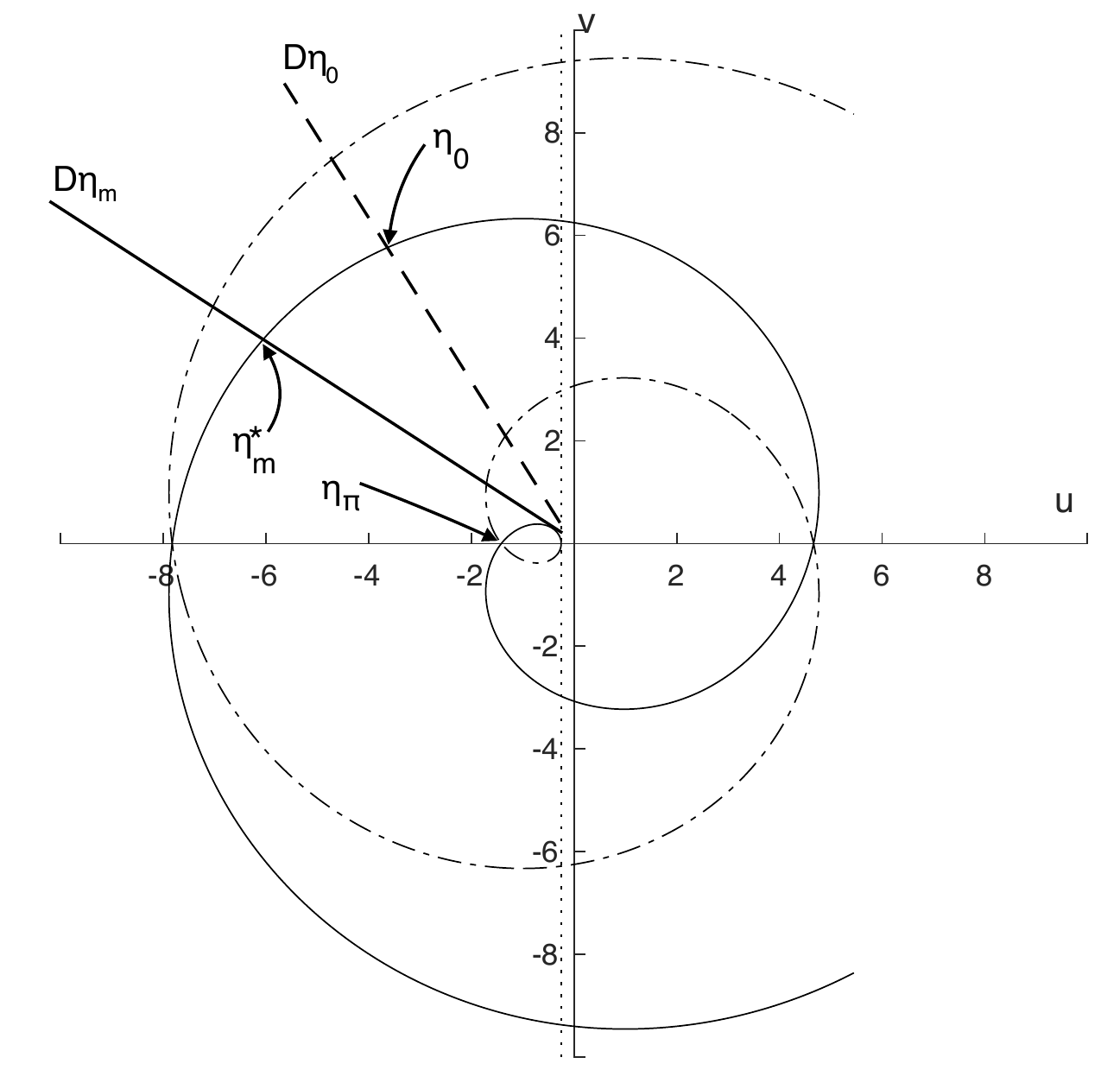} 
        \caption*{(a)}
    \end{minipage}\hfill
    \begin{minipage}{0.5\textwidth}
        \centering
        \includegraphics[width=0.9\textwidth]{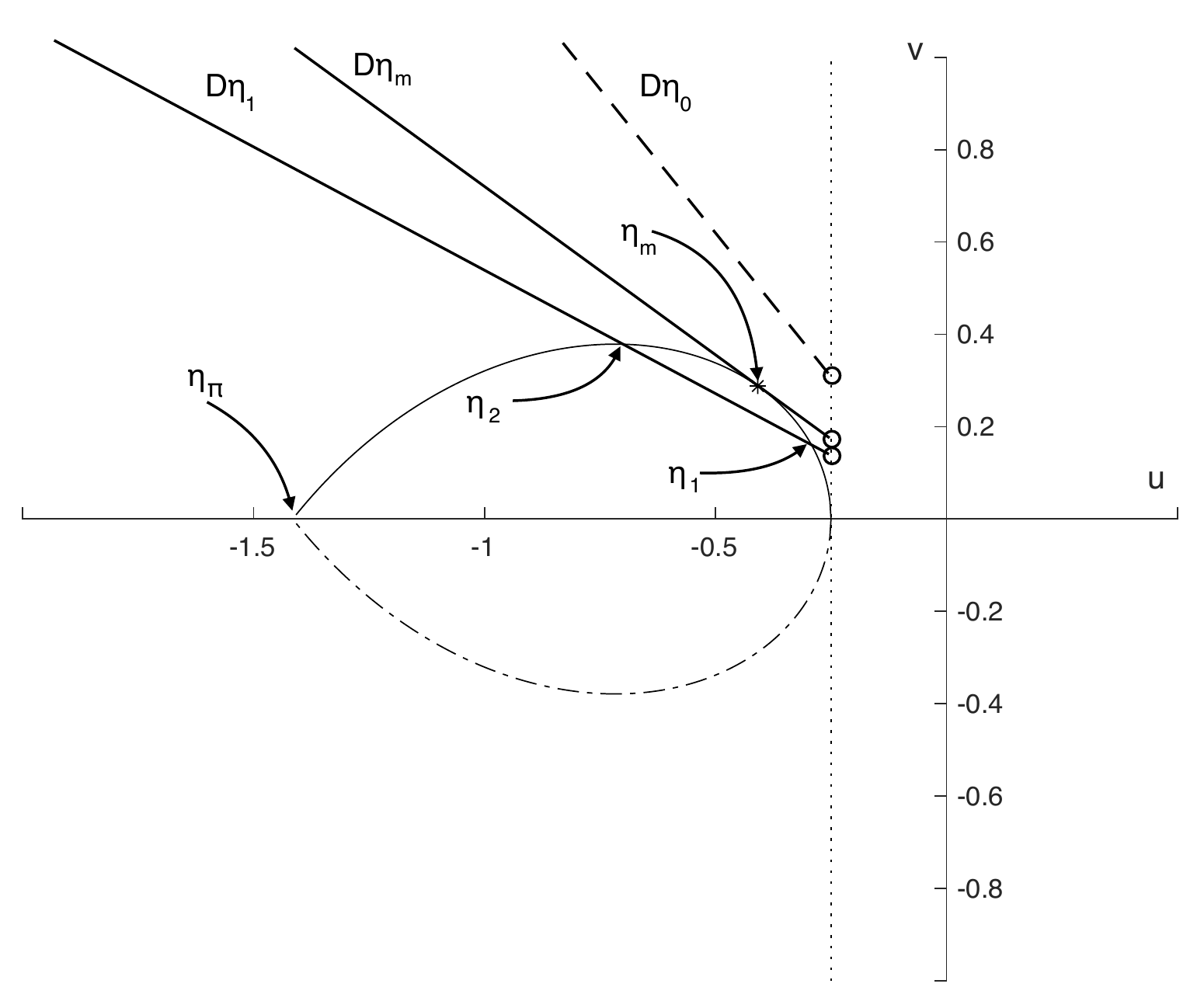} 
        \caption*{~\\~\\(b)}
    \end{minipage}
    \caption{(a): Curves $\Gamma_+$ (solid line) and $\Gamma_-$ (dash-dotted line) drawn for $\tau_0=1$ and $a=0.25$ with $w\in(|a|,10)$. The constraint related to $a$ and expressed by \eqref{eq:stability_of_complex_poly_necessary_condition} is marked with a dotted line. The first crossings of the real negative semi-axis by $\Gamma_+$ (and $\Gamma_-$) is at $\eta_\pi$. Auxiliary rays $D_{\eta_m}$ and $D_{\eta_0}$ are indicated in solid and dashed lines, respectively; (b): enlargement of the central part of (a) with $\gamma_+([|a|,|\eta_\pi|])\cup \gamma_-([|a|,|\eta_\pi|])$ with $D_{\eta_1}$ (solid line) and $D_{\eta_0}$ (dashed line), $|\eta_1|=w_1$, $|\eta_2|=w_2$. The $D_{\eta_0}$ ray is based on $\eta_0$ such that $\Arg\eta_0<\Delta\gamma_+(w_m)$; point $\eta_m=\gamma_+(w_m)$ is indicated with an arrow and a star $*$ symbol, $\Arg\eta_m=\Arg\eta_m^*$.}
    \label{fig:Gamma_curves_a_pos_small}
\end{figure}

\item[13.] Fix $0<a\leq\frac{1}{\tau_0}$. Similarly as in points 7 and 8 we focus initially on a part of $\Gamma_+$ given by $\gamma_+([a,\eta_\pi])$, as indicated in Fig.~\ref{fig:Gamma_curves_a_pos_small}. Take $\eta_1$ that fulfils \eqref{eq:stability_of_complex_poly_gamma+_path} and with $|\eta_1|=w_1<w_m$. For such $\eta_1$ we have
\[
\Delta\gamma_+(w_m)<\Arg\eta_1=\Delta\gamma_+(w_1)\leq\pi.
\]
Define a ray from the origin and passing through $\eta_1$ by
\[
D_{\eta_1}:=\{\eta=u+iv\in\CC:\abs{\eta}>\abs{a}\ \hbox{and}\ a+u<0\ \hbox{and}\ \Arg\eta=\Delta\gamma_+(w_1)\}
\]
and let $D_{\eta_1}^t:=\{t\geq0:t=|\eta|,\, \eta\in D_{\eta_1}\}$. To express $\tau$ as a function on this ray, i.e. $\tau:D_{\eta_1}^t\to (0,\infty)$ we now reformulate  \eqref{eq:stability_of_complex_poly_gamma+_argument_a_pos} to obtain
\begin{equation}\label{eq:tau_as_function_of_abs_eta_a_pos}
\tau(t):=\frac{\arctan\bigg(\frac{1}{a}\sqrt{t^2-a^2}\bigg)+\Delta\gamma_+(w_1)-\pi}{\sqrt{t^2-a^2}},
\end{equation}
where $\Delta\gamma_+(w_1)=\Arg\eta_1$.
Note also that as $w_1<w_m$ there exists $\eta_2$, with $|\eta_2|=w_2$, such that $\eta_2\in D_{\eta_1}\cap\gamma_+([a,\eta_\pi])$ and $w_m<w_2\leq\abs{\eta_\pi}$. 

The derivative of \eqref{eq:tau_as_function_of_abs_eta_a_pos} is again expressed by \eqref{eq:tau_as_function_derivative}, namely
\begin{equation*}
\frac{d\tau}{dt}(t)=\frac{t}{t^2-a^2}\bigg(\frac{a}{{t^2}}-\tau(t)\bigg),
\end{equation*} 
but, unlike in point 8, this derivative is in general not negative due to $a>0$. In fact, at the intersections $\{\eta_1,\eta_2\}=D_{\eta_1}\cap\gamma_+([a,\eta_\pi])$ we find
\begin{equation}
\begin{split}
\frac{d\tau}{dt}(w_1)=&\frac{w_1}{w_1^2-a^2}\left(\frac{a}{w_1^2}-\tau(w_1)\right)=\frac{w_1}{w_1^2-a^2}\left(\frac{a}{w_1^2}-\tau_0\right)\\
=&\frac{1}{\sqrt{w_1^2-a^2}}\left(-\frac{d\Delta\gamma_+}{dw}(w_1)\right)>0,
\end{split}
\end{equation}
where the last inequality comes from \eqref{eq:gamma+_argument_a_pos_derivative_negative}; similarly
\begin{equation}
\frac{d\tau}{dt}(w_2)=\frac{1}{\sqrt{w_2^2-a^2}}\left(-\frac{d\Delta\gamma_+}{dw}(w_2)\right)<0.
\end{equation}
We see that $\tau$ is an increasing function in a neighbourhood of $t_1=w_1$ and a decreasing one in a neighbourhood of $t_2=w_2$ i.e. at the boundaries of the $\Lambda_{\tau_0,a}$ region shown in Fig.~\ref{fig:Gamma_curves_a_pos_small}. If we show that $\tau$ has only one extreme value - a local maximum - inside  $\Lambda_{\tau_0,a}$, that is for some $t\in(w_1,w_2)$, then with the reasoning of point 8 we will show that for every $\eta$ inside $\Lambda_{\tau_0,a}$ region the roots of \eqref{eq:characteristic_equation_general} are in $\CC\setminus\CC_+$.

We are interested in the number of solutions of  $\frac{d\tau}{dt}(t)=0$, what is equivalent to the number of solutions of 
\begin{equation}\label{eq:condition_for_derivative_of_tau}
\frac{a}{{t^2}}=\frac{\arctan\bigg(\frac{1}{a}\sqrt{t^2-a^2}\bigg)+\beta}{\sqrt{t^2-a^2}},
\end{equation}
where $\beta=\Delta\gamma_+(w_1)-\pi$. Define $r:=\frac{1}{a}\sqrt{t^2-a^2}$. Then $r>0$ is a bijective image of $t>a$ and \eqref{eq:condition_for_derivative_of_tau} can be rearranged to 
\begin{equation}
\frac{r}{r^2+1}=\arctan(r)+\beta.
\end{equation}
As $\frac{\pi}{2}<\Delta\gamma_+(w_m)<\Arg\eta_1\leq\pi$ we have $\beta\in(-\frac{\pi}{2},0]$ and by Corollary~\ref{cor:rational_trig_eqaution_number_of_solutions} we infer that there is only one local extremum i.e. local maximum of $\tau$ for $t\in(w_1,w_2)$. Hence for every $\eta\in D_{\eta_1}$, $w_1\leq |\eta|\leq w_2$ we have $\tau(|\eta|)\geq\tau_0$ i.e.
\[
\Arg\eta_1\geq\sqrt{|\eta|^2-a^2}\tau_0-\arctan\left(\frac{1}{a}\sqrt{|\eta|^2-a^2}\right)+\pi.
\]
Thus by the definition of $D_{\eta_1}$ and symmetry about the real axis we obtain that for every $\eta$ with $|\eta|\leq|\eta_\pi|$ such that
\begin{equation}\label{eq:lambda_tau_a_pos_condition}
|\Arg\eta|\geq\sqrt{|\eta|^2-a^2}\tau_0-\arctan\left(\frac{1}{a}\sqrt{|\eta|^2-a^2}\right)+\pi
\end{equation}
the time $\tau$ for this $\eta$ to be such that the first root of \eqref{eq:complex_exponential_polynomial_zeros} reaches the imaginary axis is bigger than or equal to $\tau_0$. Argument similar to the one in point 8 shows that if $|\Arg\eta|\geq\Delta\gamma_+(w_m)$ then the only region we need to consider is the one given by \eqref{eq:lambda_tau_a_pos_condition}. Thus we distinguish a ray 
\[
D_{\eta_m}=\{\eta=u+iv\in\CC:|\eta|>|a|,\ a+u<0,\ \Arg\eta=\Delta\gamma_+(w_m)\}
\] 
together with a delay time function based on it, namely $\tau_m:D_{\eta_m}^t\to (0,\infty)$, 
\begin{equation}\label{eq:tau_as_function_of_abs_eta_a_pos_extreme_ray}
\tau_m(t)=\frac{\arctan\bigg(\frac{1}{a}\sqrt{t^2-a^2}\bigg)+\Delta\gamma_+(w_m)-\pi}{\sqrt{t^2-a^2}},
\end{equation}
where $\Delta\gamma_+(w_m)=\Arg\eta_m$, see Fig.~\ref{fig:Gamma_curves_a_pos_small}.  The above analysis shows that for $\tau_m$ we have $\tau_m(t)\leq\tau_0$ for every $t\in D_{\eta_m}^t$, where the equality holds only for $t=w_m$.

\item[14.] Take now, without loss of generality due to symmetry, $\eta\in\CC_-\cap\Pi_+$ such that $\re\eta<-a$ and $\frac{\pi}{2}<\Arg\eta<\Delta\gamma_+(w_m)=\Arg\eta_m$. We claim that for every such $\eta$ there is $\tau(|\eta|)<\tau_0$, where $\tau$ is defined on a ray containing $\eta$. Really, let us fix $\eta$ as above and assume otherwise i.e. $\tau(|\eta|)\geq\tau_0$. Then there exists $\eta_0$ that fulfils \eqref{eq:stability_of_complex_poly_step_1+}, $\Arg\eta_0=\Arg\eta$ and $\Delta\gamma_+(w_0)=\Arg\eta+2\pi$, where $w_0=|\eta_0|$ (see Fig.~\ref{fig:Gamma_curves_a_pos_small}). As $\eta\in D_{\eta_0}$ we have $\tau:D_{\eta_0}^t\to(0,\infty)$ defined as in \eqref{eq:tau_as_function_of_abs_eta_a_pos} but on the ray $D_{\eta_0}$, and such that for $t=|\eta|$ it takes the value
\begin{equation}\label{eq:tau_as_function_of_abs_eta_a_pos_contradiction}
\tau(t)=\frac{\arctan\bigg(\frac{1}{a}\sqrt{t^2-a^2}\bigg)+\Arg\eta_0+\pi}{\sqrt{t^2-a^2}},
\end{equation}
where we used a fact that $\Delta\gamma_+(w_0)=\Arg\eta_0+2\pi$. Note that for a fixed $t$ the above is a continuous function of $\Arg\eta_0\in (\frac{\pi}{2},\pi)$. Let us take a sequence $\{\eta_0^k\}_{k\in\NN}$ such that $\eta_0^k$ fulfils \eqref{eq:stability_of_complex_poly_step_1+}, $|\eta_0^k|<|\eta_0^{k+1}|$  for every $k\in\NN$ and $\eta_0^k\to\eta_m^*$ as $k\to\infty$, where $\Arg\eta_m^*=\Arg\eta_m$. Geometry of the problem shows that for every $k\in\NN$ we have
\[
\Arg\eta_0^k<\Arg\eta_0^{k+1}<\Arg\eta_m\quad\hbox{and}\quad D_{\eta_0^k}^t\subset  D_{\eta_0^{k}+1}^t\subset D_{\eta_m}^t.
\]
For the fixed $t$ from \eqref{eq:tau_as_function_of_abs_eta_a_pos_contradiction} consider a continuous, strictly increasing function $\tau_t:[\Arg\eta_0,\pi]\to(0,\infty)$, 
\[
\tau_t(\Arg\xi)=\frac{\arctan\bigg(\frac{1}{a}\sqrt{t^2-a^2}\bigg)+\Arg\xi+\pi}{\sqrt{t^2-a^2}}.
\] 
Our hypothesis now gives
\[
\tau_0\leq \tau(t)<\lim_{k\to\infty}\tau_t(\Arg\eta_0^k)=\tau_t(\Arg\eta_m^*)=\tau_m(t)\leq\tau_0,
\]
where we used strict monotonicity and continuity of $\tau_t$, continuity of $\gamma_+$, definition of $D_{\eta_m}$ and boundedness of $\tau_m$ given by \eqref{eq:tau_as_function_of_abs_eta_a_pos_extreme_ray}. The above contradiction proves our claim.

Thus with $0<a\leq\frac{1}{\tau_0}$ for the roots of \eqref{eq:complex_exponential_polynomial_zeros_tau0} to be in $\CC\setminus\CC_+$ the region given by \eqref{eq:lambda_tau_a_pos_condition} is the only allowable one for $\eta$. 

\item[15.] Fix $a>\frac{1}{\tau_0}$. By \eqref{eq:gamma+_argument_a_pos_derivative_negative} and a comment directly below it the continuous argument increment function $\Delta\gamma_+$ given by \eqref{eq:stability_of_complex_poly_gamma+_argument_a_pos} is now strictly increasing with range $\Delta\gamma_+([a,\infty))=[\pi,\infty)$. The minimal value of $\Delta\gamma_+(w)=\pi$ for $w=|a|$ and point 14 shows that if the roots of \eqref{eq:complex_exponential_polynomial_zeros_tau0} are in $\CC\setminus\CC_+$ then $\eta=-a$; there is no such $\eta$ that the roots of \eqref{eq:complex_exponential_polynomial_zeros_tau0} are in $\CC_-$. 

This finishes the necessity proof for \eqref{eq:complex_exponential_polynomial_zeros_tau0} and, by the same argument, for \eqref{eq:complex_exponential_polynomial_zeros}.

\item[16.] To be able to use previous notation and ease referencing we show sufficiency for  \eqref{eq:complex_exponential_polynomial_zeros_tau0}. Let $\tau_0>0$ be given and $a\leq\frac{1}{\tau_0}$.  The behaviour of the roots described in  points 1 and 2 does not change. Every $\eta\in\overbar{\Lambda_{\tau_0,a}}$, where $\Lambda_{\tau_0,a}$ is defined accordingly to $a$, is either inside $\DD_{|a|}$ or satisfies \eqref{eq:lambda_tau_a_condition}, \eqref{eq:lambda_tau_zero_condition} or \eqref{eq:lambda_tau_a_pos_condition}. Following backwards the reasoning in points 5--13 we reach the boundary condition \eqref{eq:lambda_tau_a_equality}, \eqref{eq:lambda_tau_zero_equality} or equality in  \eqref{eq:lambda_tau_a_pos_condition}, for which the roots of \eqref{eq:complex_exponential_polynomial_zeros_tau0} are on the imaginary axis, what happens exactly when $\eta$ is at the boudary of $\overbar{\Lambda_{\tau_0,a}}$.
\end{itemize}
\end{proof}
\begin{cor}\label{cor:stability_of_complex_polynomial}
Let a delay $\tau>0$, coefficients $\lambda,\gamma,\eta\in\CC$ be such that $\lambda=a+ib$ with $a\leq\frac{1}{\tau}$, $b\in\RR$, and let the corresponding $\Lambda_{\tau,a}\subset\CC$ be given by \eqref{eq:lambda_tau_a_neg}-\eqref{eq:lambda_tau_a_pos}. Then
\begin{itemize}
\item[(i)]  every solution of the equation $s-a -\eta\exp^{-s\tau}=0$ belongs to $\CC_-$ if and only if  $\eta\in\Lambda_{\tau,a}$;
\item[(ii)] every solution of  
\begin{equation}\label{eq:complex_polynomial_full_zeros}
s-\lambda-\gamma\exp^{-s\tau}=0
\end{equation}
and its version with conjugate coefficients
\begin{equation}\label{eq:complex_polynomial_full_conjugate_zeros}
s-\overbar{\lambda}-\overbar{\gamma}\exp^{-s\tau}=0
\end{equation}
belongs to $\CC_-$ if and only if $\gamma\exp^{-ib\tau}\in\Lambda_{\tau,a}$.
\end{itemize}
\end{cor}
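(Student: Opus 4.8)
The plan is to deduce Corollary~\ref{cor:stability_of_complex_polynomial} from Theorem~\ref{thm:stability_of_complex_polynomial} by upgrading ``all roots in $\CC\setminus\CC_+$'' to ``all roots in $\CC_-$''. The starting point is the trivial observation that every solution of $s-a-\eta\exp^{-s\tau}=0$ lies in $\CC_-$ if and only if every solution lies in $\CC\setminus\CC_+=\overbar{\CC_-}$ \emph{and} none of them lies on the imaginary axis $i\RR$. By Theorem~\ref{thm:stability_of_complex_polynomial} the first condition is equivalent to $a\le\frac1\tau$ and $\eta\in\overbar{\Lambda_{\tau,a}}$, so part~(i) reduces to the claim: \emph{whenever $a\le\frac1\tau$ and $\eta\in\overbar{\Lambda_{\tau,a}}$, equation~\eqref{eq:complex_exponential_polynomial_zeros} has a root on $i\RR$ if and only if $\eta\in\partial\Lambda_{\tau,a}$}. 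Granting the claim, I would observe that $\Lambda_{\tau,a}$ is open (its defining inequalities in \eqref{eq:lambda_tau_a_neg}--\eqref{eq:lambda_tau_a_pos} are strict and the disc $\DD_{|a|}$ adjoined in the case $a\le0$ is itself open), so $\overbar{\Lambda_{\tau,a}}\setminus\partial\Lambda_{\tau,a}=\Lambda_{\tau,a}$; together with Theorem~\ref{thm:stability_of_complex_polynomial} this yields: every solution lies in $\CC_-$ iff $a\le\frac1\tau$ and $\eta\in\Lambda_{\tau,a}$, i.e. (since $a\le\frac1\tau$ is assumed) exactly part~(i).

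To prove the claim I would re-run the computations from the proof of Theorem~\ref{thm:stability_of_complex_polynomial} ``at equality''. A point $s=i\omega$ with $\omega\in\RR$ solves the equation precisely when $\eta=\exp^{i\omega\tau}(-a+i\omega)$; taking moduli gives $|\eta|^2=a^2+\omega^2$, hence $|\eta|\ge|a|$ and $\omega=\pm\sqrt{|\eta|^2-a^2}$, while matching arguments turns the relation into the equality version of the argument condition defining $\Lambda_{\tau,a}$ in \eqref{eq:lambda_tau_a_neg}, \eqref{eq:lambda_tau_a_zero} or \eqref{eq:lambda_tau_a_pos} (these are exactly the boundary equalities \eqref{eq:lambda_tau_a_equality}, \eqref{eq:lambda_tau_zero_equality} and the equality case of \eqref{eq:lambda_tau_a_pos_condition} written with $\tau$ in place of $\tau_0$; see also \eqref{eq:stability_of_complex_poly_step_1+}--\eqref{eq:stability_of_complex_poly_step_1-}). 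For the converse, given $\eta\in\partial\Lambda_{\tau,a}$ one puts $\omega:=\sgn(\Arg\eta)\sqrt{|\eta|^2-a^2}$ (so $\omega=0$, $\eta=-a$, in the degenerate case, where $s=0$ trivially solves the equation for every $\tau$), and the boundary equality rearranges back to $\eta=\exp^{i\omega\tau}(-a+i\omega)$, i.e. $s=i\omega$ is a root. The one delicate point --- and the one I expect to be the main obstacle --- is that the argument identity has to hold literally, with no spurious additive multiple of $2\pi$; this is precisely what the constraint ``$|\eta|\le|\eta_\pi|$'' built into $\overbar{\Lambda_{\tau,a}}$ secures, since on $[|a|,|\eta_\pi|]$ the relevant continuous argument increment $\Delta\gamma_+$ takes values in an interval contained in $[0,\pi]$, so a root on $i\RR$ corresponds to the \emph{first} loop of the spirals $\Gamma_\pm$ (i.e. to $\partial\Lambda_{\tau,a}$) rather than to their later, discarded arcs --- cf. points~8--9 and 13--14 of the proof of Theorem~\ref{thm:stability_of_complex_polynomial}. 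Carrying out this bookkeeping carefully across the three regimes $a<0$, $a=0$, $0<a\le\frac1\tau$ is the only genuine work.

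Part~(ii) follows from Lemma~\ref{lem:complex_exponential_poly_equivalent_conditions} and the conjugation-symmetry of $\Lambda_{\tau,a}$. With $\lambda=a+ib$ and $\gamma=c+id$, Lemma~\ref{lem:complex_exponential_poly_equivalent_conditions} shows that the roots of \eqref{eq:complex_polynomial_full_zeros} lie in $\CC_-$ iff those of $z-a-\exp^{-ib\tau}\gamma\,\exp^{-z\tau}=0$ do, which by part~(i) is equivalent to $\exp^{-ib\tau}\gamma\in\Lambda_{\tau,a}$. Applying the same lemma to \eqref{eq:complex_polynomial_full_conjugate_zeros} (i.e. with $b,d$ replaced by $-b,-d$) reduces it to $z-a-\exp^{ib\tau}\overbar{\gamma}\,\exp^{-z\tau}=0$, i.e. to the parameter $\exp^{ib\tau}\overbar{\gamma}=\overbar{\exp^{-ib\tau}\gamma}$; since every condition defining $\Lambda_{\tau,a}$ in \eqref{eq:lambda_tau_a_neg}--\eqref{eq:lambda_tau_a_pos} involves $\eta$ only through $|\eta|$, $\re\eta$ and $|\Arg\eta|$, the set $\Lambda_{\tau,a}$ is symmetric about the real axis, and therefore $\overbar{\exp^{-ib\tau}\gamma}\in\Lambda_{\tau,a}$ iff $\exp^{-ib\tau}\gamma\in\Lambda_{\tau,a}$. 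Hence both \eqref{eq:complex_polynomial_full_zeros} and \eqref{eq:complex_polynomial_full_conjugate_zeros} have all their roots in $\CC_-$ exactly when $\gamma\exp^{-ib\tau}\in\Lambda_{\tau,a}$, which is part~(ii).
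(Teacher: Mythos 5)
Your proposal is correct and takes essentially the paper's route: part (i) is deduced from Theorem~\ref{thm:stability_of_complex_polynomial} by separating the open set $\Lambda_{\tau,a}$ from its boundary, which is precisely the locus $\eta=\exp^{i\omega\tau}(-a+i\omega)$ of parameters admitting a purely imaginary root (the paper compresses this step into an appeal to the continuity of the critical-delay functions \eqref{eq:tau_as_function_of_abs_eta} and \eqref{eq:tau_as_function_of_abs_eta_a_pos}, whereas you make the interior-versus-boundary dichotomy explicit). Part (ii) is argued exactly as in the paper, via Lemma~\ref{lem:complex_exponential_poly_equivalent_conditions} and the real-axis symmetry of $\Lambda_{\tau,a}$.
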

\begin{proof}
Part $(i)$ follows from the continuity of \eqref{eq:tau_as_function_of_abs_eta} or \eqref{eq:tau_as_function_of_abs_eta_a_pos}. Part $(ii)$ follows from $(i)$ and Lemma~\ref{lem:complex_exponential_poly_equivalent_conditions} by defining $\eta=\gamma\exp^{-ib\tau}$ for the case of \eqref{eq:complex_polynomial_full_zeros}, while for the case of \eqref{eq:complex_polynomial_full_conjugate_zeros} by the real-axis symmetry of $\Lambda_{\tau,a}$ we have $\eta\in\Lambda_{\tau,a}$ if and only if $\overbar{\eta}=\overbar{\gamma}\exp^{ib\tau}\in\Lambda_{\tau,a}$.
\end{proof}
\section{Discussion}

Before going to examples we make some comments concerning previous work of other authors with respect to the proof of Theorem~\ref{thm:stability_of_complex_polynomial}. We also comment on  practicality of results obtained in this paper. 

Theorem~\ref{thm:stability_of_complex_polynomial} relies on subsets $\Lambda_{\tau,a}$ of the complex plane that are defined before the theorem itself. Their origin, however, becomes clear after going through points 6 -- 7 of the proof of Theorem~\ref{thm:stability_of_complex_polynomial}. The remainder of the proof is in fact an analysis of what happens inside those regions. It is worth to mention that inequalities in \eqref{eq:lambda_tau_a_neg}--\eqref{eq:lambda_tau_a_pos} can be obtained from the result in \cite{Nishiguchi_2016} after suitable simplifications.

As noted in the introduction an analysis of $\tau$ as a function of coefficients is present also in \cite{Matsunaga_2008}. The author obtains there an inequality similar to \eqref{eq:tau_as_function_of_abs_eta}, but does so in the context where $\gamma$ of \eqref{eq:differential-difference_eq_scalars} is a $2\times 2$ real matrix of a special form (and with $\lambda\in\RR$).

The necessary and sufficient condition for stability of first order scalar differential-difference equations with complex coefficients characterised by \eqref{eq:characteristic_equation_general} are given by Corollary~\ref{cor:stability_of_complex_polynomial}. The condition is based on mutual implicit relation between coefficients of the characteristic equation \eqref{eq:characteristic_equation_general} and a subset of $\CC$ given by \eqref{eq:lambda_tau_a_neg}-\eqref{eq:lambda_tau_a_pos}. As the latter is defined by non-linear inequalities there arises a question whether numerical approximation of the rightmost root isn't a more practical approach than finding numerically (i.e. approximately) the region given by \eqref{eq:lambda_tau_a_neg}-\eqref{eq:lambda_tau_a_pos}, especially given the abundance of literature of computational techniques to approximate characteristic roots.

Analysis of dependence between $\tau$ and the crossing of the imaginary axis by the first root in points 2--3 of the proof is well-known. In one of the early works \cite{Cooke_Grossman_1982} authors discuss \eqref{eq:characteristic_equation_general} with $\lambda,\gamma\in\RR$, in  \cite{Walton_Marshall_1987} the authors show a general approach for real polynamial case of \eqref{eq:differential-difference_equation_general_linearized} with multiple delays, what is also shown in \cite{Partington_2004}. More recently such analysis is also used in \cite{Matsunaga_2008}. A good exposition of such techniques is in \cite[Chapter 5.3.2]{Michiels_Niculescu_2014}.  Our calculations in point 2--3 are in fact based on \cite{Walton_Marshall_1987} and we decided to include all of its steps for the reader's convenience.

The answer to the above question depends, in the authors' opinion, on the purpose of approaching that problem. If the purpose is an analysis of a given differential-difference equation, considered as a delayed dynamical system, fulfilling assumptions of Corollary~\ref{cor:stability_of_complex_polynomial}, than a numerical check, up to a given accuracy, of at most one of inequalities \eqref{eq:lambda_tau_a_neg}-\eqref{eq:lambda_tau_a_pos} is usually a straightforward procedure.  

If, on the other hand, the purpose is a synthesis of a delayed dynamical system that has some \textit{a priori} specified properties, as may be the case of a controller design for such system, then a numerical search for the rightmost root may carry more relevant information.

\section{Examples}
With the above discussion in mind we present examples concerning only analysis of given differential-difference equations. These examples illustrate how the necessary and sufficient conditions of Theorem \ref{thm:stability_of_complex_polynomial} can be compared with and improve known literature results. 
Note initially that the stability condition discussed in \cite{Barwell_1975} and later proved in \cite{Cahlon_Schmidt_2002}, that is $-\re\lambda>|\gamma|$, follows immediately from $\abs{\eta}<\abs{a}$ (point 3 in the proof of Theorem~\ref{thm:stability_of_complex_polynomial}). 
Note also that as Corollary~\ref{cor:stability_of_complex_polynomial} concerns the placement of roots of the characteristic equation \eqref{eq:characteristic_equation_general}, it gives also a necessary and sufficient condition for stability of \eqref{eq:differential-difference_eq_scalars}. With that in mind we give the following examples.
\subsection{Example 1}
Consider a differential-difference equation
\begin{equation}\label{eq:example_1}
x'(t)=i20x(t)+\gamma x(t-0.1),
\end{equation}
where $\RR\ni\gamma>0$. Equation \eqref{eq:example_1} is a special case of \eqref{eq:differential-difference_eq_scalars}, for which necessary and sufficient conditions of stability were found in \cite{Cahlon_Schmidt_2002}. By Corollary~\ref{cor:stability_of_complex_polynomial} equation \eqref{eq:example_1} is stable if and only if $\gamma\exp^{-i2}\in\Lambda_{0.1,0}$, where $\Lambda_{0.1,0}$ is given by \eqref{eq:lambda_tau_a_zero}. We thus obtain that \eqref{eq:example_1} is stable if and only if $\gamma<20-5\pi$, what is equivalent to the condition given by \cite[Theorem 3.1]{Cahlon_Schmidt_2002}.

\subsection{Example 2}
Consider the differential-difference equation 
\begin{equation}\label{eq:example_2_diff}
x'(t)=\left(\frac{1}{4}+i\frac{\pi}{4}\right)x(t)-\left(\frac{1}{\sqrt{2}}+i\frac{1}{\sqrt{2}}\right)x(t-1),
\end{equation}
for which the corresponding characteristic equation takes the form
\begin{equation}\label{eq:example_2_char}
s-\left(\frac{1}{4}+i\frac{\pi}{4}\right)-\left(-\frac{1}{\sqrt{2}}-i\frac{1}{\sqrt{2}}\right)\exp^{-s}=0.
\end{equation}
By Corollary \ref{cor:stability_of_complex_polynomial} and \eqref{eq:lambda_tau_a_pos} (or, in fact, by investigating Fig.~\ref{fig:Lambda_tau_a_for_different_a}) we see that \eqref{eq:example_2_char} is stable.

\subsection{Example 3}
In \cite{Noonburg_1969} the author considers a semi-linear system version of \eqref{eq:differential-difference_equation_general} and - due to the approach method - states results only for a fixed delay $\tau=1$. The exemplary system analysed there is transformed to the form of \eqref{eq:differential-difference_equation_general_linearized} with $\tau=1$, $A=0$ i.e.
\begin{equation}\label{eq:example_3}
x'(t)=Bx(t-1),\quad B=\left(\begin{array}{cc}
        -1 & \frac{1}{8}\\
        -1 & -1
        \end{array}
\right).
\end{equation}
%
As $A=0$, and thus $\lambda=0$, we are interested only in eigenvalues of $B$, which are $-1\pm i\frac{1}{\sqrt{8}}$. The author concludes that the system is stable.

With conditions \eqref{eq:lambda_tau_a_zero} we can improve results for the exemplary system in \cite{Noonburg_1969} by finding a maximal delay $\tau$ for which such system 
remains stable. Let $\eta=-1+i\frac{1}{\sqrt{8}}$. Then $|\eta|=\frac{3\sqrt{2}}{4}$ and $\Arg\eta=\pi-\arctan\frac{1}{\sqrt{8}}$ and by \eqref{eq:lambda_tau_a_zero} we obtain that \eqref{eq:example_3} is stable if and only if 
\[
0<\tau<\frac{1}{|\eta|}\left(\Arg\eta-\frac{\pi}{2}\right)=\frac{2\sqrt{2}}{3}\left(\frac{\pi}{2}-\arctan\frac{1}{\sqrt{8}}\right).
\]
Note that we do not need to consider $\overbar{\eta}$ due to the symmetry of $\Lambda_{\tau,0}$ about the real axis.

\subsection{Example 4}
Previous examples relate current results to the ones known from the literature and thus demonstrate the technique. The following example shows how the current results can be used in the case of a retarded partial differential equation in an abstract formulation. 
 
Let the representation of our system be
\begin{equation}\label{eq:retarded_non-autonomous_system}
\left\{\begin{array}{ll}
        \dot{z}(t)=Az(t)+A_{1}z(t-\tau)+Bu(t)\\
        z(0)=x,        
       \end{array}
\right.
\end{equation}
where the state space $X$ is a Hilbert space, $A:D(A)\subset X\to X$ is a closed, densely defined diagonal generator of a $C_0$-semigroup $(T(t))_{t\geq0}$ on $X$, $A_1\in\calL(X)$ is also a diagonal operator and $0<\tau<\infty$ is a fixed delay. The input function is $u\in L^2(0,\infty;\CC)$ and $B$ is the control operator. We assume that $X$ posses a Riesz basis $(\phi_k)_{k\in\NN}$ consisting of eigenvectors of $A$, which has a corresponding sequence of eigenvalues $(\lambda_k)_{k\in\NN}$. 

A simplified form of \eqref{eq:retarded_non-autonomous_system} is analysed in \cite{Partington_Zawiski_2019} from the perspective of admissibility which, roughly speaking, asserts whether a solution $z$ of \eqref{eq:retarded_non-autonomous_system} follows a required type of trajectory. One of the key elements in the approach to admissibility analysis presented in \cite{Partington_Zawiski_2019} is to establish when a differential equation associated with the $k$-th component of \eqref{eq:retarded_non-autonomous_system}, namely 
\begin{equation}\label{eq:k-th_component_of_diagonal_non-autonomous_system}
\left\{\begin{array}{ll}
        \dot{z}_k(t)=\lambda_{k}z_{k}(t)+\gamma_{k}z_{k}(t-\tau)\\
        z_k(0)=x_k,        
       \end{array}
\right.
\end{equation}
is stable, where $\lambda_k\in\CC$ is an eigenvalue of $A$, $\gamma_k\in\CC$ is an eigenvalue of $A_1$ and $x_k\in\CC$ is an initial condition for the $k$-th component of $X$. Then, having stability conditions for every $k\in\NN$, one may proceed with analysis for the whole $X$. Based on Corollary~\ref{cor:stability_of_complex_polynomial} we immediately obtain a genuine approach method of obtaining these stability conditions, namely
\begin{prop}\label{prop:whole_system_infty_admissibility}
For a given delay $\tau\in(0,\infty)$ and sequences $(\lambda_k)_{k\in\NN}$ and $(\gamma_k)_{k\in\NN}$ consider a corresponding set of Cauchy problems of the form \eqref{eq:k-th_component_of_diagonal_non-autonomous_system}. For every $k\in\NN$ system \eqref{eq:k-th_component_of_diagonal_non-autonomous_system} is stable if and only if 
 \[
 \lambda_k=a_k+i\beta_k\in\left\{z\in\CC:\re(z)<\frac{1}{\tau}\right\}\quad\hbox{and}\quad  \gamma_k\exp^{-i\beta_k\tau}\in\Lambda_{\tau,a_k}\quad\forall k\in\NN,
 \]
with $\Lambda_{\tau,a_k}$ defined in \eqref{eq:lambda_tau_a_neg}-- \eqref{eq:lambda_tau_a_pos}. \qed
\end{prop}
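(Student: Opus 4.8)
\textbf{Proof proposal for Proposition~\ref{prop:whole_system_infty_admissibility}.}
The plan is to observe that the proposition is essentially a componentwise restatement of Corollary~\ref{cor:stability_of_complex_polynomial}(ii) together with the definition of exponential/asymptotic stability for a scalar first-order DDE, so almost all of the work has already been done. First I would fix $k\in\NN$ and write $\lambda_k=a_k+i\beta_k$ with $a_k,\beta_k\in\RR$. The characteristic equation associated with \eqref{eq:k-th_component_of_diagonal_non-autonomous_system} is precisely \eqref{eq:complex_polynomial_full_zeros} with $\lambda=\lambda_k$, $\gamma=\gamma_k$ and $\tau$ the given delay; hence, by the standard theory of scalar linear autonomous DDEs (e.g. via the variation-of-constants / Laplace-transform representation of the solution, or directly from the cited principle of linearized stability), the zero solution $z_k\equiv0$ of \eqref{eq:k-th_component_of_diagonal_non-autonomous_system} is exponentially stable if and only if every root $s$ of that characteristic equation satisfies $\re(s)<0$, i.e. lies in $\CC_-$.

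Next I would simply quote Corollary~\ref{cor:stability_of_complex_polynomial}(ii): for the given $\tau>0$ and $\lambda_k=a_k+i\beta_k$, every root of \eqref{eq:complex_polynomial_full_zeros} belongs to $\CC_-$ if and only if $a_k\le\tfrac{1}{\tau}$ and $\gamma_k\exp^{-i\beta_k\tau}\in\Lambda_{\tau,a_k}$, where $\Lambda_{\tau,a_k}$ is the open set defined in \eqref{eq:lambda_tau_a_neg}--\eqref{eq:lambda_tau_a_pos}. Strictly speaking Corollary~\ref{cor:stability_of_complex_polynomial}(ii) phrases the condition in terms of membership in the (open) $\Lambda_{\tau,a_k}$, which corresponds to all roots lying strictly in $\CC_-$; one should note that $\gamma_k\exp^{-i\beta_k\tau}\in\Lambda_{\tau,a_k}$ already forces $a_k<\tfrac1\tau$ in the cases $a\in(0,\tfrac1\tau]$ and is compatible with $a_k\le 0$ otherwise, so the displayed condition $\lambda_k\in\{z:\re z<\tfrac1\tau\}$ is exactly the set of admissible real parts, and can be absorbed into the statement "$\gamma_k\exp^{-i\beta_k\tau}\in\Lambda_{\tau,a_k}$" once one records that $\Lambda_{\tau,a_k}$ is empty (equivalently, reduces to a single boundary point giving a root on the imaginary axis) when $a_k\ge\tfrac1\tau$. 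I would make this bookkeeping explicit so that the "if and only if" in the proposition is literally the one in the corollary.

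Finally, since $k\in\NN$ was arbitrary, I would conclude by adding the quantifier: each scalar system \eqref{eq:k-th_component_of_diagonal_non-autonomous_system} is stable if and only if the displayed pair of conditions holds, and this holds for every $k$ exactly when the "$\forall k\in\NN$" version stated in the proposition holds. The only genuinely nontrivial input is the equivalence between exponential stability of the scalar DDE and the left-half-plane location of all characteristic roots; everything else is a direct citation of Corollary~\ref{cor:stability_of_complex_polynomial} and the real/imaginary-part splitting $\lambda_k=a_k+i\beta_k$, $\eta_k:=\gamma_k\exp^{-i\beta_k\tau}$. The main obstacle, such as it is, is purely expository: making sure the role of the boundary of $\Lambda_{\tau,a_k}$ (roots on the imaginary axis, hence only non-strict stability) is handled so that the strict inequality $\re(s)<0$ — and correspondingly the \emph{open} set $\Lambda_{\tau,a_k}$ and the strict bound $\re\lambda_k<\tfrac1\tau$ — match up exactly, since Theorem~\ref{thm:stability_of_complex_polynomial} is stated with closures and $\le$ whereas Corollary~\ref{cor:stability_of_complex_polynomial} and this proposition are stated with the open set and $<$.
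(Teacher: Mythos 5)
Your proposal is correct and matches the paper's (implicit) argument: the paper states this proposition with no separate proof, treating it exactly as you do — a componentwise application of Corollary~\ref{cor:stability_of_complex_polynomial}(ii) with $\eta_k=\gamma_k\exp^{-i\beta_k\tau}$, combined with the standard equivalence between exponential stability of the scalar DDE and all characteristic roots lying in $\CC_-$. Your extra bookkeeping about the open set $\Lambda_{\tau,a_k}$ versus its closure and the strict bound $\re\lambda_k<\frac{1}{\tau}$ (empty $\Lambda_{\tau,a_k}$ at $a_k=\frac{1}{\tau}$, no admissible $\eta$ for $a_k>\frac{1}{\tau}$) is sound and only makes explicit what the paper leaves to the reader.
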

Notice that Proposition~\ref{prop:whole_system_infty_admissibility} not only extends \cite[Proposition 3.5]{Partington_Zawiski_2019} by adding the necessary condition, but it also allows for analysis of unbounded $A$, as it includes e.g. the case when $a_k\to-\infty$ as $k\to\infty$. This is in fact exactly the case presented in \cite{Kapica_Partington_Zawiski_2022}.

\section{Acknowledgements}
%
The authors would like to thank Prof. Yuriy Tomilov for mentioning to them reference \cite{Noonburg_1969}. 
The research of Rafał Kapica was supported by the Faculty of Applied Mathematics AGH UST statutory tasks within subsidy of Ministry of Education and Science.
The work of Radosław Zawiski was performed when he was a visiting researcher at the Centre for Mathematical Sciences of the Lund University, hosted by Sandra Pott, and supported by the Polish National Agency for Academic Exchange (NAWA) within the Bekker programme under the agreement PPN/BEK/2020/1/00226/U/00001/A/00001.


\providecommand{\bysame}{\leavevmode\hbox to3em{\hrulefill}\thinspace}
\providecommand{\MR}{\relax\ifhmode\unskip\space\fi MR }
\providecommand{\MRhref}[2]{%
  \href{http://www.ams.org/mathscinet-getitem?mr=#1}{#2}
}
\providecommand{\href}[2]{#2}

\end{document}